\documentclass{ifacconf}

\usepackage{graphicx}      % include this line if your document contains figures
\usepackage{natbib}        % required for bibliography
\newtheorem{theorem}{Theorem}[section]
\newtheorem{corollary}[theorem]{Corollary}
\newtheorem{lemma}[theorem]{Lemma}
\newtheorem{proposition}[theorem]{Proposition}

\theoremstyle{definition}
\newtheorem{definition}[theorem]{Definition}
\newtheorem{remark}{Remark}
\begin{document}
\begin{frontmatter}

\title{Asymptotic Solution of a Cheap Control Game with Slow and Fast State Variables}  %%%%Title\thanksref{footnoteinfo}}
% Title, preferably not more than 10 words.

%\thanks[footnoteinfo]{Sponsor and financial support acknowledgment
%goes here. Paper titles should be written in uppercase and lowercase
%letters, not all uppercase.}

\author[First]{Valery Y. Glizer}\ {\bf and}\
\author[Second]{Vladimir Turetsky}

\address[First]{Department of Mathematics, Braude College of Engineering,
   Karmiel, 2161002 Israel (e-mail: valgl120@gmail.com).}
\address[Second]{Department of Mathematics, Braude College of Engineering,
   Karmiel, 2161002 Israel (e-mail: turetsky1@braude.ac.il)}

\begin{abstract}                % Abstract of 50--100 words
A finite-horizon zero-sum linear-quadratic differential game is considered. Its features are: (i) the control cost of the minimizing player in the game's cost functional is much smaller than the control cost of the maximizing player and the state cost; (ii) the cost of the fast state variable in the integrand of the cost functional is a positive semi-definite (but non-zero) quadratic form. These features require developing a significantly novel approach to asymptotic analysis of the matrix Riccati differential equation associated with the considered game. Using this analysis, an asymptotic solution of the game is derived. An illustrative example is presented.
\end{abstract}

\begin{keyword}
zero-sum linear-quadratic differential game, cheap control game, slow and fast state variables, positive semi-definite cost of fast state, matrix Riccati differential equation, singular perturbation, asymptotic solution, approximate-saddle point, pursuit-evasion game.
\end{keyword}

\end{frontmatter}
%===============================================================================

\section{Introduction}
We study a two-person finite-horizon zero-sum linear-quadratic differential game. The features of this game are the following: (i) the control cost of the minimizer (the minimizing player) in the game's cost functional is small in comparison with the control cost of the maximizer (the maximizing player) and  with the cost of the state variables; (ii) the matrix-valued coefficient of the cost of the fast state variable in the integral part of the cost functional is a positive semi-definite (but non-zero) matrix. The feature (i) means that the considered game is a cheap control game. Cheap control problems have a considerable importance in such topics as: (1) singular controls and arcs (see, e.g, \cite{Glizer-Kelis-book} and references therein);
(2) limiting forms and maximally achievable accuracy of optimal regulators and filters (see, e.g., \cite{Kwakernaak-Sivan-1972,Braslavsky-et-al-1999} and references therein); (3) inverse optimal control problems (see e.g. \cite{Moylan-Anderson-1973}); (4) high gain control problems (see, e.g., \cite{Kokotovic-Khalil-OReilly-1999} and references therein).
Due to the smallness of the control cost, the Hamilton boundary-value problem and the Hamilton-Jacobi-Bellman-Isaacs
equation, associated with a cheap control problem by solvability conditions, are singularly perturbed. This feature means that cheap control problems are sources of novel classes of singularly perturbed differential equations, thus having a considerable importance in the theory of differential equations.\\
As it is aforementioned, here we study a cheap control differential game. Cheap control differential games are extensively investigated in the literature. The review of this topic can be found in \cite{Glizer-Turetsky-Symmetry}. In most of the works, devoted to this topic, two types of the quadratic cost of the fast state variable in the integral part of the cost functional are considered: (a) the cost is a positive definite quadratic form; (b) the cost is zero.
In this paper, we study an intermediate case, i.e., the case where the quadratic cost of the fast state variable in the integral part of the cost functional is a positive semi-definite (but non-zero) quadratic form.
More precisely, in this paper, we consider the finite-horizon zero-sum linear-quadratic differential game with the cheap control of the minimizer. The game's dynamics has both, slow and fast, state variables. The cost of the fast variable in the integrand of the game's cost functional is a positive semi-definite (but non-zero) quadratic form. This game is an essential generalization of the game studied in \cite{Glizer-Turetsky-Symmetry} where the entire state variable is fast, and the matrix-valued coefficient for the minimizer's control in the dynamics is quadratic and invertible.\\
State-feedback saddle-point solution of the game, considered in the present paper, is reduced to the solution of the singularly perturbed terminal-value problem for the game-theoretic matrix Riccati differential equation. Asymptotic solution of this problem is derived. The derivation of this asymptotic solution requires the developing an essentially novel method, which is a significant generalization of the method of asymptotic solution of a singularly perturbed matrix Riccati differential equation known in the literature (see, e.g., \cite{Glizer-Kelis-book,Kokotovic-IEEE} and references therein).\\
Based on the aforementioned asymptotic solution to the terminal-value problem for the game-theoretic Riccati equation, an asymptotic approximation of the game's value and an approximate-saddle point are derived.\\
The main notations, applied in the paper, are: (I) $E^{n}$ is the $n$-dimensional real Euclidean space; (II) $\|\cdot\|$ is the Euclidean norm either of a vector, or of a matrix; (III) upper index $^T$ denotes the transposition either of a vector, or of a matrix; (IV) $I_{n}$ is the identity matrix of dimension $n$; (V) ${\rm col}(x,y)$, where $x \in E^{n}$, $y \in E^{m}$, is a column block vector with the upper block $x$ and the lower block $y$; (VI) ${\rm diag}(a_{1},...,a_{n})$ is a diagonal matrix with the diagonal entries $a_{1}$,... $a_{n}$; (VII) $L^{2}[t_{1},t_{2}; E^{n}]$ is the space of all functions $z(\cdot): [t_1,t_2] \rightarrow E^{n}$ square integrable in the interval $[t_1,t_2]$.

\section{Problem Statement}\label{probl-stat}

\subsection{Game Formulation\label{Sec2.1}}

We consider a two-player differential game which dynamics is described by the system
\begin{equation}\label{eq-x}\begin{array}{c}
dx(t)/dt=A_{1}(t)x(t)+A_{2}(t)y(t)\\
+C_{1}(t)v(t),\  \ t \in [0,t_{f}],\  \ x(0)=x_{0},\end{array}
\end{equation}
\begin{equation}\label{eq-y}\begin{array}{c}
dy(t)/dt=A_{3}(t)x(t)+A_{4}(t)y(t)\\ +u(t)+C_{2}(t)v(t),\  \ t \in [0,t_{f}],\  \ y(0)=y_{0},\end{array}
\end{equation}
where $t_{f} > 0$ is a given time instant; $x(t)\in E^{n}$ and $y(t)\in E^{m}$, $(m > 1)$ are the state vectors; $u(t)\in E^{m}$ and $v(t)\in E^{l}$ are the players' controls; $A_{i}(t)$, $(i=1,..,4)$ and
$C_{j}(t)$, $(j=1,2)$ are given matrices of corresponding
dimensions; $x_{0}\in E^{n}$ and $y_{0}\in E^{m}$ are given vectors; the matrix-valued functions $A_{i}(t)$, $(i=1,..,4)$ and $C_{j}(t)$, $(j=1,2)$ are continuous in the interval $[0,t_{f}]$.

The cost functional, to be minimized by the control $u$ (the minimizer's control) and maximized by the control $v$ (the maximizer's control), is
\begin{equation}\begin{array}{c}
J(u,v)= x^{T}(t_{f})F_{1}x(t_{f})
+ \int\limits_{0}^{t_{f}}\big[x^{T}(t)D_{1}(t)x(t)\\ + y^{T}(t)D_{2}(t)y(t) + \varepsilon^{2}u^{T}(t)u(t)
-v^{T}(t)G(t)v(t)\big]dt,\end{array}
\label{perf-ind}
\end{equation}
where $F_{1}$, $D_{j}(t)$, ($j=1,2$), $G(t)$ are given symmetric matrices of corresponding dimensions; $F_{1}$ is constant positive semi-definite; for any $t \in [0,t_{f}]$, $D_{1}(t)$ and $D_{2}(t) \ne 0$ are positive semi-definite, while $G(t)$ is positive definite; the matrix-valued functions $D_{j}(t)$, ($j=1,2$) and $G(t)$ are continuous in the interval $[0,t_{f}]$; $\varepsilon > 0$ is a small parameter.
\begin{remark}\label{cheap-contr-game}We assume that both players of the game (\ref{eq-x})-(\ref{perf-ind}) are aware of all the game's data and of the game's current position $\{x(t),y(t),t\}$. Since the parameter $\varepsilon > 0$ is small, the control cost of the minimizer is small in comparison with the state cost in the functional (\ref{perf-ind}). Hence, the game (\ref{eq-x})-(\ref{perf-ind}) is a cheap control game. We call the game (\ref{eq-x})-(\ref{perf-ind}) the Cheap Control Differential Game (CCDG).
\end{remark}
\begin{remark}\label{slow-fast-state-variables} The nonsingular control transformation $\widehat{u}(t) = \varepsilon u(t)$, ($\widehat{u}(t)$ is a new minimizer's control) converts the CCDG into the equivalent zero-sum differential game consisting of the differential equations (\ref{eq-x}) and
\begin{equation}\label{sing-pert-eq-y}\begin{array}{c} \varepsilon\frac{dy(t)}{dt} = \varepsilon\big[A_{3}(t)x(t) + A_{4}(t)y(t)\\ + C_{2}(t)v(t)\big] + \widehat{u}(t),\  \ t \in [0,t_{f}],\  \ y(0) = y_{0},\end{array}
\end{equation}
and of the cost functional
\begin{equation}\label{no-cheap-contr-funct}\begin{array}{c}\widehat{J}(\widehat{u},v) = x^{T}(t_{f})F_{1}x(t_{f}) +  \int_{0}^{t_{f}}\big[x^{T}(t)D_{1}(t)x(t)\\
+ y^{T}(t)D_{2}(t)y(t) + \widehat{u}^{T}(t)\widehat{u}(t) - v^{T}(t)G(t)v(t)\big]dt.\end{array}
\end{equation}
The dynamic system of the new game is singularly perturbed where $x(t)$ is a slow state variable and $y(t)$ is a fast state variable (see, e.g., \cite{Vasil'eva-SIAM}). Therefore, we call the state variables $x(t)$ and $y(t)$ of the CCDG a slow state variable and a fast state variable, respectively.
\end{remark}
\begin{remark}\label{D_2-generalization} The assumption that $D_{2}(t)$ is positive semi-definite is a considerable generalization of the results of \cite{Glizer-Kelis-book}. In this book, the asymptotic analysis of the cheap control  zero-sum linear-quadratic differential game is essentially based on the assumption that the matrix $D_{2}(t)$ is positive definite for all $t \in [0,t_{f}]$. In \cite{Glizer-Turetsky-Symmetry}, a particular case of the game (\ref{eq-x})-(\ref{perf-ind}) is analyzed. Namely, in \cite{Glizer-Turetsky-Symmetry}, the slow state variable $x(t)$ is absent, i.e., the dynamics consists only of the equation (\ref{eq-y}) with $A_{3}(t) \equiv 0$ and in the cost functional $F_{1} = 0$, $D_{1}(t) \equiv 0$, $D_{2}(t) \ne 0$ is positive semi-definite, $G(t)$ is positive definite.
\end{remark}
\begin{remark}\label{D_2-form} For the sake of the simplicity, we assume that the matrix $D_{2}(t)$ has the form: 
\begin{equation}\label{form-D_2}\begin{array}{c} D_{2}(t) = {\rm diag}\big(\lambda_{1}(t),\lambda_{2}(t),...,\lambda_{m}(t)\big),\\ 
\lambda_{k}(t) \ge 0,\  \  \ k = 1,...,m,\  \ t \in [0,t_{f}].\end{array}
\end{equation}
\end{remark}

\subsection{Main Definitions \label{Subsec2.2}}
Let us consider the block vectors
\begin{equation}\label{z-z_0}z \stackrel{\triangle}{=} {\rm col}(x , y) \in E^{n+m},\  \  \  \ z_{0}\stackrel{\triangle}{=}{\rm col}(x_{0} , y_{0}) \in E^{n+m}.\end{equation}
By ${\mathcal{U}}$, we denote the set of all functions $u=u(z,t):
E^{n+m}\times [0,t_{f}] \rightarrow E^{m}$, which are measurable w.r.t. $t \in [0,t_{f}]$ for any fixed $z \in E^{n+m}$ and satisfy the local Lipschitz condition w.r.t. $z \in E^{n+m}$ uniformly in $t \in [0,t_{f}]$. Similarly, by ${\mathcal{V}}$, we denote the set of all functions $v=v(z,t): E^{n+m}\times [0,t_{f}] \rightarrow E^{l}$, which are measurable w.r.t. $t \in [0,t_{f}]$ for any fixed $z \in E^{n+m}$ and satisfy the local Lipschitz condition w.r.t. $z \in E^{n+m}$ uniformly in $t \in [0,t_{f}]$.

The following definitions are based on the results of \cite{Glizer-Kelis-book}.
\begin{definition}\label{admis-pair}Let $UV$ be the set of all pairs\\
$\big(u(z,t),v(z,t)\big)$ such that: (i) $u(z,t)\in {\mathcal{U}}$, $v(z,t)\in {\mathcal{V}}$;
(ii) the initial-value problem (\ref{eq-x})-(\ref{eq-y}) for $u(t)=u(z,t)$,
$v(t)=v(z,t)$ and any $x_{0}\in E^{n}$, $y_{0}\in E^{m}$ has the unique absolutely continuous solution $z_{uv}(t; z_{0})={\rm col}\big(x_{uv}(t; z_{0}) , y_{uv}(t; z_{0})\big)$, $\big(x_{uv}(t; z_{0})\in E^{n}\ ,\ y_{uv}(t; z_{0})\in E^{m}\big)$ in the entire interval $[0,t_{f}]$; (iii) $u\big(z_{uv}(t; z_{0}),t\big)\in
L^{2}[0,t_{f}; E^{m}]$; (v) $v\big(z_{uv}(t; z_{0}),t\big)\in L^{2}[0,t_{f}; E^{l}]$.
In what follows, $UV$ is called the set of all admissible pairs of the players' state-feedback
controls in the CCDG.
\end{definition}
For a given $u(z,t)\in {\mathcal U}$, consider the sets
${\mathcal E}_{v}\big(u(z,t)\big)\stackrel{\triangle}{=}\{v(z,t)\in{\mathcal V}:\big(u(z,t),v(z,t)\big)\in UV\}$
and
${\mathcal H}_{u}\stackrel{\triangle}{=}\{u(z,t)\in{\mathcal U}: {\mathcal E}_{v}\big(u(z,t)\big)\ne\emptyset\}$.\\
Similarly, for a given $v(z,t)\in {\mathcal V}$, consider the sets
${\mathcal E}_{u}\big(v(z,t)\big)\stackrel{\triangle}{=}\{u(z,t)\in{\mathcal U}:\big(u(z,t),v(z,t)\big)\in UV\}$
and
${\mathcal H}_{v}\stackrel{\triangle}{=}\{v(z,t)\in{\mathcal V}: {\mathcal E}_{u}\big(v(z,t)\big)\ne\emptyset\}.$
\begin{definition}\label{guaran-res-u}For a given $u(z,t)\in{\mathcal H}_{u}$, the value
\begin{equation}
J_{u}\big(u(z,t);z_{0}\big) =\sup_{v(z,t)\in {\mathcal E}_{v}\big(u(z,t)\big)}J\big( u(z,t),v(z,t)\big)  \label{guarant-res-u}\end{equation}
is called the guaranteed result of $u(z,t)$ in the CCDG.
\end{definition}
\begin{definition}\label{guaran-res-v}For a given $v(z,t)\in{\mathcal H}_{v}$, the value
\begin{equation}
J_{v}\big(v(z,t);z_{0}\big) =\inf_{u(z,t)\in {\mathcal E}_{u}\big(v(z,t)\big)}J\big(u(z,t),v(z,t)\big)  \label{guarant-res-v}\end{equation}
is called the guaranteed result of $v(z,t)$ in the CCDG.
\end{definition}
\begin{definition}\label{equilibr-SDG} A pair $\big(u^{*}(z,t) , v^{*}(z,t)\big)$ is called a saddle point of the CCDG if
\begin{equation}\label{equality-guar-res}J_{u}\big(u^{*}(z,t);z_{0}\big) = J_{v}\big(v^{*}(z,t);z_{0}\big)\  \ \forall z_{0} \in E^{n+m}.\end{equation}
In this case, the value 
\begin{equation}\label{game-value} J^{*}(z_{0}) = J_{u}\big(u^{*}(z,t);z_{0}\big) = J_{v}\big(v^{*}(z,t);z_{0}\big)\end{equation} 
is called a value of the CCDG.
\end{definition}

\subsection{Objectives of the Paper}
The objectives of this paper are: (I) to establish the existence of a saddle point and a value of the CCDG for all sufficiently small $\varepsilon > 0$;
(II) to derive asymptotic approximations for these saddle point and value.

\section{Solvability Condition of the CCDG}

Let us introduce the following block-form matrices:
\begin{eqnarray}\label{new-matrices}A(t) \stackrel{\triangle}{=} \left[\begin{array}{l}A_{1}(t)\  \ A_{2}(t)\\
A_{3}(t)\  \ A_{4}(t)\end{array}\right],\
B \stackrel{\triangle}{=} \left[\begin{array}{l}0\\ I_{m}\end{array}\right],\  C(t) \stackrel{\triangle}{=} \left[\begin{array}{l}C_{1}(t)\\ C_{2}(t)\end{array}\right],\nonumber\\
F \stackrel{\triangle}{=} \left[\begin{array}{l}F_{1}\  \ 0\\ 0\  \  \  \ 0\end{array}\right],\  \ D(t) \stackrel{\triangle}{=} \left[\begin{array}{l}D_{1}(t)\  \  0\\ 0\  \  \  \  \  \  \  \ D_{2}(t)\end{array}\right],\nonumber\\
\end{eqnarray}
where $A(t)$, $F$, $D(t)$ are of the dimension $(n+m)\times(n+m)$; $B$ is of the dimension $(n+m)\times m$; $C(t)$ is of the dimension $(n+m)\times l$.\\
Consider the terminal-value problem for the $(n+m)\times(n+m)$ matrix-valued function $K(t)$ in the interval $[0,t_{f}]$
\begin{equation}\label{eq-K}\begin{array}{c}dK(t)/dt = - K(t)A(t) - A^{T}(t)K(t)\\ + K(t)[S_{u}(\varepsilon) -  S_{v}(t)]K(t) - D(t),\ K(t_{f}) = F,\end{array}
\end{equation}
\begin{equation}\label{S_u-S_v}\begin{array}{l}S_{u}(\varepsilon) = \varepsilon^{-2}BB^{T} = \varepsilon^{-2}\left[\begin{array}{l}0\  \ 0\\ 0\  \ I_{m}\end{array}\right],\\
S_{v}(t) = C(t)G^{-1}(t)C^{T}(t).\end{array}
\end{equation}
We assume that:\\
{\bf A1.} For a given $\varepsilon > 0$, the terminal-value problem (\ref{eq-K}) has the symmetric solution $K(t) = K(t,\varepsilon)$ in the entire interval $[0,t_{f}]$.

Since the right-hand side of the differential equation in (\ref{eq-K}) is a smooth function with respect to $K(t)$, the solution $K(t) = K(t,\varepsilon)$ is unique.\\
For all $(z,t) \in E^{n+m}\times[0,t_{f}]$, consider the functions
\begin{equation}\label{u*-v*}\begin{array}{l}u^{*}_{\varepsilon}(z,t) = - \varepsilon^{-2}B^{T}(t) K(t,\varepsilon)z,\\
v^{*}_{\varepsilon}(z,t) = G^{-1}(t)C^{T}(t)K(t,\varepsilon)z.\end{array}
\end{equation}
By virtue of the results of \cite{Basar-Olsder,Glizer-Kelis-book}, we have the assertion.
\begin{proposition}\label{solvability-K}Let the assumption A1 be fulfilled. Then, the pair $\big(u^{*}_{\varepsilon}(z,t) , v^{*}_{\varepsilon}(z,t)\big)$ is the saddle point of the CCDG. The value of this game has the form 
\begin{equation}\label{J^*_eps} J_{\varepsilon}^{*} = J\big(u^{*}_{\varepsilon}(z,t) , v^{*}_{\varepsilon}(z,t)\big) = z_{0}^{T}K(0,\varepsilon)z_{0}.\end{equation} 
For any $u(z,t) \in {\mathcal E}_{u}\big(v^{*}_{\varepsilon}(z,t)\big)$ and any $v(z,t) \in {\mathcal E}_{v}\big(u^{*}_{\varepsilon}(z,t)\big)$, the following inequality is valid: 
\begin{equation}\label{saddle-point-ineq}J\big(u^{*}_{\varepsilon}(z,t),v(z,t)\big) \le J_{\varepsilon}^{*} \le J\big(u(z,t),v^{*}_{\varepsilon}(z,t)\big).\end{equation}
\end{proposition}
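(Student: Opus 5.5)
The plan is the standard completion-of-squares argument based on the Riccati equation (\ref{eq-K}) (the Lyapunov-type verification used in \cite{Basar-Olsder,Glizer-Kelis-book}). First I will check that the pair $\big(u^{*}_{\varepsilon}(z,t),v^{*}_{\varepsilon}(z,t)\big)$ is admissible. By A1, $K(t,\varepsilon)$ is continuous on $[0,t_{f}]$, so both feedbacks are linear in $z$ with continuous bounded gains. They are therefore globally Lipschitz in $z$ uniformly in $t$. The closed-loop system is linear with continuous coefficients, so it has a unique absolutely continuous solution on $[0,t_{f}]$ for every $z_{0}$, and the resulting controls are continuous, hence in $L^{2}$. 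Thus the pair lies in $UV$, and consequently $u^{*}_{\varepsilon}\in{\mathcal H}_{u}$ and $v^{*}_{\varepsilon}\in{\mathcal H}_{v}$.

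Next comes the key identity. Take any admissible pair $(u,v)$ with trajectory $z(t)$, and write $u(t)=u(z(t),t)$, $v(t)=v(z(t),t)$. The function $z^{T}(t)K(t,\varepsilon)z(t)$ is absolutely continuous, so I will differentiate it along $dz/dt=A z+Bu+Cv$ and substitute $dK/dt$ from (\ref{eq-K}). Completing squares, using $S_{u}=\varepsilon^{-2}BB^{T}$ and $S_{v}=CG^{-1}C^{T}$, should give
\begin{equation*}
\frac{d}{dt}\big(z^{T}Kz\big)=-z^{T}Dz-\varepsilon^{2}u^{T}u+v^{T}Gv+\varepsilon^{2}\|u-u^{*}_{\varepsilon}\|^{2}-(v-v^{*}_{\varepsilon})^{T}G(v-v^{*}_{\varepsilon}),
\end{equation*}
where $u^{*}_{\varepsilon},v^{*}_{\varepsilon}$ are evaluated at $(z(t),t)$. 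I will integrate this over $[0,t_{f}]$ and use $K(t_{f},\varepsilon)=F$, so that $z^{T}(t_{f})Fz(t_{f})=x^{T}(t_{f})F_{1}x(t_{f})$. The integrability of every term follows from the $L^{2}$ conditions in Definition \ref{admis-pair}. This yields
\begin{equation*}
J(u,v)=z_{0}^{T}K(0,\varepsilon)z_{0}+\int_{0}^{t_{f}}\Big[\varepsilon^{2}\|u-u^{*}_{\varepsilon}\|^{2}-(v-v^{*}_{\varepsilon})^{T}G(v-v^{*}_{\varepsilon})\Big]dt .
\end{equation*}

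Everything then follows from this formula. Setting $u=u^{*}_{\varepsilon}$ gives $J\le z_{0}^{T}K(0,\varepsilon)z_{0}$ for every $v\in{\mathcal E}_{v}(u^{*}_{\varepsilon})$, because $G>0$. Setting $v=v^{*}_{\varepsilon}$ gives $J\ge z_{0}^{T}K(0,\varepsilon)z_{0}$ for every $u\in{\mathcal E}_{u}(v^{*}_{\varepsilon})$. Both bounds are attained at the pair $(u^{*}_{\varepsilon},v^{*}_{\varepsilon})$. This proves (\ref{saddle-point-ineq}) and (\ref{J^*_eps}), and it also shows $J_{u}(u^{*}_{\varepsilon};z_{0})=J_{v}(v^{*}_{\varepsilon};z_{0})=z_{0}^{T}K(0,\varepsilon)z_{0}$, which is (\ref{equality-guar-res}).

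The main obstacle is not the algebra but justifying the integrated identity rigorously for arbitrary admissible feedbacks. The issue is ensuring that $z^{T}Kz$ is absolutely continuous and that the product rule holds almost everywhere when $u,v$ are only measurable in $t$. Absolute continuity of $z$, together with continuous differentiability of $K$ (the right-hand side of (\ref{eq-K}) is continuous), should handle this.
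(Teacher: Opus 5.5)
Your proposal is correct. The identity $J(u,v)=z_0^TK(0,\varepsilon)z_0+\int_0^{t_f}\big[\varepsilon^2\|u-u^*_\varepsilon\|^2-(v-v^*_\varepsilon)^TG(v-v^*_\varepsilon)\big]dt$ checks out; it relies on the symmetry of $K$ granted by A1 when the cross terms are written as $2z^TKBu$ and $2z^TKCv$. This completion-of-squares verification is the same argument as the results of Basar--Olsder and Glizer--Kelis, which the paper simply cites instead of giving a proof.
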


\section{Asymptotic Solution of the Terminal-Value Problem (\ref{eq-K})}

We derive the asymptotic solutions to the problem (\ref{eq-K}) subject to the condition
\begin{equation}\label{lambda_k}\begin{array}{r}\lambda_{p}(t) > 0,\  \ p = 1,...,m_{1},\  \ 1 \le m_{1} < m,\  \ t \in [0,t_{f}],\\
\lambda_{r}(t) \equiv 0,\  \ r = m_{1} + 1,...,m,\  \ t \in [0,t_{f}],\end{array}
\end{equation}
where $\lambda_{k}(t)$, $(k = 1,2,...,m)$ are the entries of the matrix $D_{2}(t)$ (see Remark \ref{D_2-form}, the equation (\ref{form-D_2})).
\subsection{Transformation of the Problem (\ref{eq-K})}

Due to the form of the matrix $S_{u}(\varepsilon)$ (see the equation (\ref{S_u-S_v})), the right-hand side of the differential equation in (\ref{eq-K}) has the singularity for $\varepsilon = 0$. To remove this singularity, we transform the problem (\ref{eq-K}) in a proper way. Namely, using the symmetry of the solution to (\ref{eq-K}) and the form of the matrices $D_{2}(t)$ and $S_{u}(\varepsilon)$ (see the equations (\ref{form-D_2}), (\ref{S_u-S_v}) and (\ref{lambda_k})), we represent this solution as follows:
\begin{equation}\label{block-form}K(t,\varepsilon) = \left[\begin{array}{l}\  \ K_{1}(t,\varepsilon)\ \ \varepsilon K_{2}(t,\varepsilon)\ \  \  \ \varepsilon K_{3}(t,\varepsilon)\\
\varepsilon K_{2}^{T}(t,\varepsilon)\ \ \varepsilon K_{4}(t,\varepsilon)\ \  \  \ \varepsilon^{2} K_{5}(t,\varepsilon)\\
\varepsilon K_{3}^{T}(t,\varepsilon)\ \ \varepsilon^{2} K_{5}^{T}(t,\varepsilon)\ \ \varepsilon^{2} K_{6}(t,\varepsilon)\end{array}\right],
\end{equation}
where the matrices $K_{1}(t,\varepsilon)$, $K_{4}(t,\varepsilon)$ and $K_{6}(t,\varepsilon)$ are of the dimensions $n\times n$, $m_{1}\times m_{1}$ and $(m-m_{1})\times(m-m_{1})$, respectively, and these matrices are symmetric.

Let us partition the matrices $A(t)$ and $S_{v}(t)$ into blocks as follows:
\begin{equation}\label{block-form-A-C}\begin{array}{c}
A(t) = \left[\begin{array}{l}\bar{A}_{1}(t)\ \ \bar{A}_{2}(t)\ \ \bar{A}_{3}(t)\\
\bar{A}_{4}(t)\ \ \bar{A}_{5}(t)\ \ \bar{A}_{6}(t)\\
\bar{A}_{7}(t)\ \ \bar{A}_{8}(t)\ \ \bar{A}_{9}(t)\end{array}\right],\\
S_{v}(t) =  \left[\begin{array}{l}S_{v,1}(t)\ \ S_{v,2}(t)\ \ S_{v,3}(t)\\
S_{v,2}^{T}(t)\ \ S_{v,4}(t)\ \ S_{v,5}(t)\\
S_{v,3}^{T}(t)\ \ S_{v,5}^{T}(t)\ \ S_{v,6}(t)\end{array}\right],\end{array}
\end{equation}
where $\bar{A}_{1}(t) = A_{1}(t)$, $\big[\bar{A}_{2}(t),\bar{A}_{3}(t)\big] = A_{2}(t)$, $\left[\begin{array}{c}\bar{A}_{4}(t)\\ \bar{A}_{7}(t)\end{array}\right] = A_{3}(t)$, $\left[\begin{array}{l}\bar{A}_{5}(t)\ \ \bar{A}_{6}(t)\\ \bar{A}_{8}(t)\ \ \bar{A}_{9}(t)\end{array}\right] = A_{4}(t)$;
the blocks $\bar{A}_{1}(t)$, $\bar{A}_{5}(t)$, $\bar{A}_{9}(t)$ and $S_{v,1}(t)$, $S_{v,4}(t)$, $S_{v,6}(t)$ are of the same dimensions as the corresponding blocks in (\ref{block-form}); $S_{v,j}^{T}(t) = S_{v,j}(t)$, $(j = 1,4,6)$.

In what follows, the asymptotic solution of the problem (\ref{eq-K}) and, therefore, the asymptotic solution of the CCDG are based on the assumption\\
{\bf A2.} $\bar{A}_{3}(t) \equiv 0$, $t \in [0,t_{f}]$.\\
In Section \ref{reduced-game}, the meaning of the assumption A2 is explained. In Section \ref{example}, it is shown that this assumption is fulfilled, for instance, in a real-life pursuit-evasion game.

Using the equations (\ref{new-matrices}),(\ref{S_u-S_v}),(\ref{lambda_k}),(\ref{block-form}),(\ref{block-form-A-C}), and the assumption A2, we can rewrite the terminal-value problem (\ref{eq-K}) in the following equivalent form in the interval $[0,t_{f}]$ (in this problem, for simplicity, we omit the designation of the dependence of the
unknown matrices on $\varepsilon$):
\begin{eqnarray}\label{equiv-eq-K1}dK_{1}(t)/dt = - K_{1}(t)\bar{A}_{1}(t) - \bar{A}_{1}^{T}(t)K_{1}(t)\nonumber\\ +  K_{2}(t)K_{2}^{T}(t) + K_{3}(t)K_{3}^{T}(t) - K_{1}(t)S_{v,1}(t)K_{1}(t)\nonumber\\ - D_{1}(t) - \varepsilon{\mathcal G}_{1}\big(K_{1}(t),K_{2}(t),K_{3}(t),t,\varepsilon\big),\nonumber\\
\end{eqnarray}
\begin{eqnarray}\label{equiv-eq-K2}
\varepsilon dK_{2}(t)/dt = - K_{1}(t)\bar{A}_{2}(t) + K_{2}(t)K_{4}(t)\nonumber\\
- \varepsilon{\mathcal G}_{2}\big(K_{1}(t),K_{2}(t),K_{3}(t),K_{4}(t),K_{5}(t),t,\varepsilon\big),\nonumber\\
\end{eqnarray}
\begin{eqnarray}\label{equiv-eq-K3}
dK_{3}(t)/dt = - K_{2}(t)\bar{A}_{6}(t) - K_{3}(t)\bar{A}_{9}(t) - \bar{A}_{1}^{T}(t)K_{3}(t)\nonumber\\ + K_{2}(t)K_{5}(t) +  K_{3}(t)K_{6}(t)
- K_{1}(t)S_{v,1}(t)K_{3}(t)\nonumber\\ - \varepsilon{\mathcal G}_{3}\big(K_{1}(t),K_{2}(t),K_{3}(t), K_{5}(t), K_{6}(t),t,\varepsilon\big),\nonumber\\
\end{eqnarray}
\begin{eqnarray}\label{equiv-eq-K4}
\varepsilon dK_{4}(t)/dt = K_{4}^{2}(t)
- \Lambda(t)\nonumber\\ - \varepsilon{\mathcal G}_{4}\big(K_{2}(t),K_{4}(t),K_{5}(t),t,\varepsilon\big),\nonumber\\
\end{eqnarray}
\begin{eqnarray}\label{equiv-eq-K5}
\varepsilon dK_{5}(t)/dt = - K_{4}(t)\bar{A}_{6}(t) - \bar{A}_{2}^{T}(t)K_{3}(t) + K_{4}(t)K_{5}(t)\nonumber\\
- \varepsilon{\mathcal G}_{5}\big(K_{2}(t),K_{3}(t),K_{4}(t),K_{5}(t),K_{6}(t),t,\varepsilon\big),
\nonumber\\
\end{eqnarray}
\begin{eqnarray}\label{equiv-eq-K6}
dK_{6}(t)/dt = - K_{5}^{T}(t)\bar{A}_{6}(t) - K_{6}(t)\bar{A}_{9}(t)
- \bar{A}_{6}^{T}(t)K_{5}(t)\nonumber\\ - \bar{A}_{9}^{T}(t)K_{6}(t) + K_{5}^{T}(t)K_{5}(t)
+ K_{6}^{2}(t)\nonumber\\ - K_{3}^{T}(t)S_{v,1}(t)K_{3}(t)
- \varepsilon{\mathcal G}_{6}\big(K_{3}(t),K_{5}(t),K_{6}(t),t,\varepsilon\big),\nonumber\\
\end{eqnarray}
\begin{equation}\label{termin-cond}
K_{1}(t_{f}) = F_{1},\  \  \ K_{\alpha}(t_{f}) = 0,\  \  \ \alpha = 2,3,...,6,
\end{equation}
where ${\mathcal G}_{\alpha}(\cdot)$, $(\alpha = 1,...,6)$ are known smooth functions;
\begin{equation}\label{Lambda}\Lambda(t) \stackrel{\triangle}{=} {\rm diag}\big(\lambda_{1}(t),...,\lambda_{m_{1}}(t)\big).\end{equation}
\begin{remark}\label{difference-representation}The representation (\ref{block-form}) and the system  (\ref{equiv-eq-K1})--(\ref{equiv-eq-K6}) differ considerably from the results of \cite{Glizer-Kelis-book,Kokotovic-IEEE}. These representation and system are essentially novel results in the asymptotic analysis of parameter-dependent matrix Riccati differential equations with singularities, arising in differential games, optimal control problems and $H_{\infty}$ control problems.
\end{remark}

\subsection{Zero-Order Asymptotic Solution of the Terminal-Value Problem (\ref{equiv-eq-K1})--(\ref{equiv-eq-K6}), (\ref{termin-cond})}

We derive the asymptotic solution of the problem (\ref{equiv-eq-K1})--(\ref{equiv-eq-K6}), (\ref{termin-cond}) subject to the following assumption:\\
{\bf A3.} The matrix-valued functions $A_{i}(t)$, $(i = 1,...,4)$, $C_{j}(t)$, $D_{j}(t)$, $(j = 1,2)$, $G(t)$ are continuously differentiable in the interval $[0,t_{f}]$.

The problem (\ref{equiv-eq-K1})--(\ref{equiv-eq-K6}), (\ref{termin-cond}) is a singularly perturbed terminal-value problem where $K_{1}(t)$, $K_{3}(t)$, $K_{6}(t)$ are slow states, while $K_{2}(t)$, $K_{4}(t)$, $K_{5}(t)$ are fast states. Based on the Boundary Functions Method (see, e.g., \cite{Vasil'eva-SIAM}), we look for the zero-order asymptotic solution of this problem in the form
\begin{equation}\label{asymp-K}\begin{array}{c}K_{\alpha,0}(t,\varepsilon) = K_{\alpha,0}^{o}(t) + K_{\alpha,0}^{b}(\tau),\\
\alpha = 1,...,6,\  \  \  \ \tau = (t-t_{f})/\varepsilon,\end{array}
\end{equation}
where the terms with the superscript $o$ constitute the so-call outer solution, the terms with the superscript $b$ are the boundary corrections in the left-hand neighborhood of $t = t_{f}$; $\tau \le 0$ is a new independent variable, called the stretched time; for any $t \in [0,t_{f})$, $\tau \rightarrow - \infty$ as $\varepsilon \rightarrow + 0$. Equations and boundary conditions for the asymptotic solution terms are obtained substituting $ K_{\alpha,0}(t,\varepsilon)$, $(\alpha = 1,...,6)$ into the problem (\ref{equiv-eq-K1})--(\ref{equiv-eq-K6}), (\ref{termin-cond}) instead of $K_{\alpha}(t)$ and equating the coefficients for the same power of $\varepsilon$ on both sides of the resulting equations, separately depending on $t$ and on $\tau$. Moreover, the boundary corrections should satisfy the condition $\lim_{\tau \rightarrow - \infty}K_{\alpha,0}^{b}(\tau) = 0$, $(\alpha = 1,...,6)$.

Boundary correction terms $K_{\alpha_{s},0}^{b}(\tau)$, $(\alpha_{s} = 1,3,6)$ satisfy the problems
\begin{equation}\label{eq-K^b_js}dK_{\alpha_{s},0}^{b}(\tau)/d\tau = 0,\ \lim_{\tau \rightarrow - \infty} K_{\alpha_{s},0}^{b}(\tau) = 0,\ \alpha_{s} = 1,3,6,\end{equation}
yielding
\begin{equation}\label{mathcal-K^b_js}
K_{\alpha_{s},0}^{b}(\tau) \equiv 0,\  \  \ \tau \le 0,\  \  \ \alpha_{s} = 1,3,6.
\end{equation}

For the outer solution terms, we obtain the following equations and conditions:
\begin{eqnarray}\label{eq-outer-terms-K1}dK_{1,0}^{o}(t)/dt = - K_{1,0}^{o}(t)\bar{A}_{1}(t) - \bar{A}_{1}^{T}(t)K_{1,0}^{o}(t)\nonumber\\ + K_{2,0}^{o}(t)\big(K_{2,0}^{o}(t)\big)^{T} + K_{3,0}^{o}(t)\big( K_{3,0}^{o}(t)\big)^{T}\nonumber\\ - K_{1,0}^{o}(t)S_{v,1}(t)K_{1,0}^{o}(t) - D_{1}(t),\ K_{1,0}^{o}(t_{f}) = F_{1},\nonumber\\
\end{eqnarray}
\begin{eqnarray}\label{eq-outer-terms-K2}
0 = - K_{1,0}^{o}(t)\bar{A}_{2}(t) + K_{2,0}^{o}(t)K_{4,0}^{o}(t),
\end{eqnarray}
\begin{eqnarray}\label{eq-outer-terms-K3}
dK_{3,0}^{o}(t)/dt = - K_{2,0}^{o}(t)\bar{A}_{6}(t) - K_{3,0}^{o}(t)\bar{A}_{9}(t)\nonumber\\ - \bar{A}_{1}^{T}(t)K_{3,0}^{o}(t) + K_{2,0}^{o}(t)K_{5,0}^{o}(t) + K_{3,0}^{o}(t)K_{6,0}^{o}(t)\nonumber\\
- K_{1,0}^{o}(t)S_{v,1}(t)K_{3,0}^{o}(t),\ K_{3,0}^{o}(t_{f}) = 0,\nonumber\\
\end{eqnarray}
\begin{eqnarray}\label{eq-outer-terms-K4}
0 = K_{4}^{2}(t) - \Lambda(t),
\end{eqnarray}
\begin{eqnarray}\label{eq-outer-terms-K5}
0 = - K_{4,0}^{o}(t)\bar{A}_{6}(t) - \bar{A}_{2}^{T}(t)K_{3,0}^{o}(t) + K_{4,0}^{o}(t)K_{5,0}^{o}(t),
\nonumber\\
\end{eqnarray}
\begin{eqnarray}\label{eq-outer-terms-K6}
dK_{6,0}^{o}(t)/dt = - \big(K_{5,0}^{o}(t)\big)^{T}\bar{A}_{6}(t) - K_{6,0}^{o}(t)\bar{A}_{9}(t)\nonumber\\
- \bar{A}_{6}^{T}(t)K_{5,0}^{o}(t) - \bar{A}_{9}^{T}(t)K_{6,0}^{o}(t) + \big(K_{5,0}^{o}(t)\big)^{T}K_{5,0}^{o}(t)\nonumber\\
+ \big(K_{6,0}^{o}(t)\big)^{2} - \big(K_{3,0}^{o}(t)\big)^{T}S_{v,1}(t)K_{3,0}^{o}(t),\ K_{6,0}^{o}(t_{f}) = 0.\nonumber\\
\end{eqnarray}

Resolving the algebraic equations (\ref{eq-outer-terms-K2}), (\ref{eq-outer-terms-K4}, (\ref{eq-outer-terms-K5}) with respect to $K_{2,0}^{o}(t)$, $ K_{4,0}^{o}(t)$, $K_{5,0}^{o}(t)$, $t \in [0,t_{f}]$, we obtain
\begin{equation}\label{outer-solution}\begin{array}{r}K_{4,0}^{o}(t) = \Lambda^{1/2}(t) = {\rm diag}\big(\sqrt{\lambda_{1}(t)},...,\sqrt{\lambda_{m_{1}}(t)}\big),\\
K_{2,0}^{o}(t) = K_{1,0}^{o}(t)\bar{A}_{2}(t)\Lambda^{-1/2}(t),\\
K_{5,0}^{o}(t) = \bar{A}_{6}(t) + \Lambda^{-1/2}(t)\bar{A}_{2}^{T}(t)K_{3,0}^{o}(t).\end{array}
\end{equation}

Substituting the expressions for $K_{2,0}^{o}(t)$ and $K_{5,0}^{o}(t)$ into the differential equation (\ref{eq-outer-terms-K3}) for $K_{3,0}^{o}(t)$, we obtain after a routine algebra
\begin{equation}\label{eq-K_30^o}\begin{array}{r}dK_{3,0}^{o}(t)/dt = - K_{3,0}^{o}(t)\bar{A}_{9}(t) -  \bar{A}_{1}^{T}(t)K_{3,0}^{o}(t)\\ + K_{1,0}^{o}(t)\bar{A}_{2}(t)\Lambda^{-1}(t)\bar{A}_{2}^{T}(t)K_{3,0}^{o}(t)
+ K_{3,0}^{o}(t)K_{6,0}^{o}(t)\\ - K_{1,0}^{o}(t)S_{v,1}(t)K_{3,0}^{o}(t),\  \ K_{3,0}^{o}(t_{f}) = 0,\end{array}
\end{equation}
which yields
\begin{equation}\label{K_30^o}K_{3,0}^{o}(t) \equiv 0,\  \  \ t \in [0,t_{f}].\end{equation}

Using (\ref{outer-solution}) and (\ref{K_30^o}), we can rewrite the terminal-value problems (\ref{eq-outer-terms-K1}) and (\ref{eq-outer-terms-K6}) for $K_{1,0}^{o}(t)$ and $K_{6,0}^{o}(t)$, respectively, as:
\begin{equation}\label{eq-K_10^o}\begin{array}{r}dK_{1,0}^{o}(t)/dt = - K_{1,0}^{o}(t)\bar{A}_{1}(t) -  \bar{A}_{1}^{T}(t)K_{1,0}^{o}(t)\\ + K_{1,0}^{o}(t)\big[\bar{A}_{2}(t)\Lambda^{-1}(t)\bar{A}_{2}^{T}(t) - S_{v,1}(t)\big]K_{1,0}^{o}(t)\\
- D_{1}(t),\  \  \ t \in [0,t_{f}],\  \ K_{1,0}^{o}(t_{f}) = F_{1},\end{array}
\end{equation}
\begin{equation}\label{eq-K_60^o}\begin{array}{r}dK_{6,0}^{o}(t)/dt = - K_{6,0}^{o}(t)\bar{A}_{9}(t) -  \bar{A}_{9}^{T}(t)K_{6,0}^{o}(t)\\ + \big(K_{6,0}^{o}(t)\big)^{2}
- \bar{A}_{6}^{T}(t)\bar{A}_{6}(t),\ t \in [0,t_{f}],\ K_{6,0}^{o}(t_{f}) = 0.\end{array}
\end{equation}
The equation (\ref{eq-K_10^o}) represents the terminal-value problem for the game-theoretic matrix Riccati differential equation, while the equation (\ref{eq-K_60^o}) represents the terminal-value problem for the control-theoretic matrix Riccati differential equation.

By virtue of the results of \cite{Kwakernaak-Sivan}, the terminal-value problem (\ref{eq-K_60^o}) has the solution $K_{6,0}^{o}(t)$ in the entire interval $[0,t_{f}]$. However, the terminal-value problem (\ref{eq-K_10^o}) may not have the solution in the entire interval $[0,t_{f}]$. Therefore, in what follows, we assume\\
{\bf A4.} The terminal-value problem (\ref{eq-K_10^o}) has the solution $K_{1,0}^{o}(t)$ in the entire interval $[0,t_{f}]$.

\begin{remark}\label{cond-exist-solution} Sufficient conditions for the existence of the solution to the terminal-value problem (\ref{eq-K_10^o}), mentioned in the assumption A4, can be found in \cite{Glizer-Turetsky-Optim} and references therein.
\end{remark}

Proceed to the derivation of the boundary correction terms $K_{\alpha_{f},0}^{b}(\tau)$, $(\alpha_{f} = 2,4,5)$. Using the equations (\ref{mathcal-K^b_js}),(\ref{outer-solution}),(\ref{K_30^o}),(\ref{eq-K_10^o}), we obtain the terminal-value problem for these terms
\begin{eqnarray}\label{eq-K_20^b=K_40^b-K_50^b}dK_{2,0}^{b}(\tau)/d\tau = K_{2,0}^{b}(\tau)\Lambda^{1/2}(t_{f})\nonumber\\ + F_{1}\bar{A}_{2}(t_{f})\Lambda^{-1/2}(t_{f})K_{4,0}^{b}(\tau)
+ K_{2,0}^{b}(\tau)K_{4,0}^{b}(\tau),\nonumber\\ \tau \le 0,\  \  \ K_{2,0}^{b}(0) = - F_{1}\bar{A}_{2}(t_{f})\Lambda^{-1/2}(t_{f}),\nonumber\\
dK_{4,0}^{b}(\tau)/d\tau = K_{4,0}^{b}(\tau)\Lambda^{1/2}(t_{f}) + \Lambda^{1/2}(t_{f})K_{4,0}^{b}(\tau)\nonumber\\
+ \big(K_{4,0}^{b}(\tau)\big)^{2},\  \  \ \tau \le 0,\  \ K_{4,0}^{b}(0) = - \Lambda^{1/2}(t_{f}),\nonumber\\
dK_{5,0}^{b}(\tau)/d\tau = \Lambda^{1/2}(t_{f})K_{5,0}^{b}(\tau) + K_{4,0}^{b}(\tau)K_{5,0}^{b}(\tau),\nonumber\\
\tau \le 0,\  \  \ K_{5,0}^{b}(0) = - \bar{A}_{6}(t_{f}).\nonumber
\end{eqnarray}
This problem yields the solution
\begin{equation}\label{K_20^b=K_40^b-K_50^b}\begin{array}{r}K_{2,0}^{b}(\tau) = -2F_{1}\bar{A}_{2}(t_{f})\Lambda^{-1/2}(t_{f}){\mathcal H}_{1}^{2}(\tau)
{\mathcal H}_{2}(\tau),\\
K_{4,0}^{b}(\tau) = - 2\Lambda^{1/2}(t_{f}){\mathcal H}_{1}^{2}(\tau){\mathcal H}_{2}(\tau),\\
K_{5,0}^{b}(\tau) = - 2{\mathcal H}_{1}(\tau){\mathcal H}_{2}(\tau)\bar{A}_{6}(t_{f}),\end{array}
\end{equation}
where ${\mathcal H}_{1}(\tau) \stackrel{\triangle}{=} \exp\big(\Lambda^{1/2}(t_{f})\tau\big)$, ${\mathcal H}_{2}(\tau) \stackrel{\triangle}{=} \big[I_{m_{1}} + {\mathcal H}_{1}^{2}(\tau)\big]^{-1}$.\\
Due to (\ref{lambda_k}) and (\ref{Lambda}), $K_{\alpha_{f},0}^{b}(\tau)$, $(\alpha_{f} = 2,4,5)$ are exponentially decaying for $\tau \le 0$, i.e.,
\begin{eqnarray}\label{ineq-K_20^b=K_40^b-K_50^b}\big\|K_{2,0}^{b}(\tau)\big\| \le a_{2}\exp(2\beta\tau),\ \ \big\|K_{4,0}^{b}(\tau)\big\| \le a_{4}\exp(2\beta\tau),\nonumber\\
\big\|K_{5,0}^{b}(\tau)\big\| \le a_{5}\exp(\beta\tau),\nonumber
\end{eqnarray}
where $a_{2} > 0$, $a_{4} > 0$, $a_{5} > 0$ are some constants; $\beta \stackrel{\triangle}{=} \min_{p \in \{1,...,m_{1}\}}\sqrt{\lambda_{p}(t_{f})} > 0$.

\subsection{Reduced Differential Game}\label{reduced-game}

Let us set $\varepsilon = 0$ in the differential game, consisting of the singularly perturbed dynamics (\ref{eq-x}),(\ref{sing-pert-eq-y}) and the non-cheap control cost functional (\ref{no-cheap-contr-funct}). Re-denoting the notations $x$, $y$, $\widehat{u}$, $v$ and ${\widehat J}$ in the resulting system and functional with $x_{\rm r}$, $y_{\rm r}$, $\widehat{u}_{\rm r}$, $v_{\rm r}$ and ${\widehat J}_{\rm r}$, respectively, we obtain
\begin{equation}\label{reduced-eq-x}\begin{array}{c}dx_{\rm r}(t)/dt = A_{1}(t)x_{\rm r}(t) + A_{2}(t)y_{\rm r}(t)\\
+ C_{1}(t)v_{\rm r}(t),\  \  \ t \in [0,t_{f}],\  \  \ x_{\rm r}(0) = x_{0},\end{array}
\end{equation}
\begin{equation}\label{reduced-eq-y}
\widehat{u}_{\rm r}(t) = 0,\  \  \  \ t \in [0,t_{f}],
\end{equation}
\begin{equation}\label{reduced-funct}\begin{array}{c}\widehat{J}_{\rm r} = x_{\rm r}^{T}(t_{f})F_{1}x_{\rm r}(t_{f}) +  \int_{0}^{t_{f}}\big[x_{\rm r}^{T}(t)D_{1}(t)x_{\rm r}(t)\\
+ y_{\rm r}^{T}(t)D_{2}(t)y_{\rm r}(t) + \widehat{u}_{\rm r}^{T}(t)\widehat{u}_{\rm r}(t) - v_{\rm r}^{T}(t)G(t)v_{\rm r}(t)\big]dt.\end{array}
\end{equation}
Due to (\ref{reduced-eq-y}), the functional (\ref{reduced-funct}) becomes
\begin{equation}\label{reduced-funct-2}\begin{array}{c}\widehat{J}_{\rm r} = x_{\rm r}^{T}(t_{f})F_{1}x_{\rm r}(t_{f}) +  \int_{0}^{t_{f}}\big[x_{\rm r}^{T}(t)D_{1}(t)x_{\rm r}(t)\\
+ y_{\rm r}^{T}(t)D_{2}(t)y_{\rm r}(t) - v_{\rm r}^{T}(t)G(t)v_{\rm r}(t)\big]dt.\end{array}
\end{equation}
In the set (\ref{reduced-eq-x}),(\ref{reduced-funct-2}), the variable $y_{\rm r}(t)$, $t \in [0,t_{f}]$ does not satisfy any equation. Hence, we can choose
it to satisfy a desirable property of the system (\ref{reduced-eq-x}) and the functional (\ref{reduced-funct-2}),
i.e., as a control in these system and functional. Moreover, the control of the maximizer $v_{\rm r}(t)$ is present in (\ref{reduced-eq-x}),(\ref{reduced-funct-2}), while a minimizer's control does not appear in these system and functional. Therefore, it is reasonable to choose $y_{\rm r}(t)$ as a minimizer's control. This observation means that the functional (\ref{reduced-funct-2}) depends on $y_{\rm r}$ and $v_{\rm r}$, i.e., $\widehat{J}_{\rm r} = \widehat{J}_{\rm r}(y_{\rm r},v_{\rm r})$, where $y_{\rm r}(\cdot)$ is the control of the minimizing player, while $v_{\rm r}(\cdot)$ is the control of the maximizing player.
In this game, both players are aware of all the game’s data and of the game’s current position $\{x_{\rm r}(t), t\}$.\\
Since the matrix $D_{2}(t)$ is not invertible (see Remark \ref{D_2-form} and the equation (\ref{lambda_k})), then the game (\ref{reduced-eq-x}),(\ref{reduced-funct-2}) can be directly solved neither by the Isaacs’s MinMax principle nor by the Bellman-Isaacs equation method, i.e., it is singular. However, if the assumption A2 is fulfilled, the game (\ref{reduced-eq-x}),(\ref{reduced-funct-2}) becomes as:
\begin{equation}\label{reduced-eq-x-new}\begin{array}{c}dx_{\rm r}(t)/dt = A_{1}(t)x_{\rm r}(t) + \bar{A}_{2}(t)y_{\rm r,1}(t)\\
+ C_{1}(t)v_{\rm r}(t),\  \  \ t \in [0,t_{f}],\  \  \ x_{\rm r}(0) = x_{0},\end{array}
\end{equation}
\begin{equation}\label{reduced-funct-new}\begin{array}{c}\widehat{J}_{\rm r}(y_{\rm r,1},v_{\rm r}) = x_{\rm r}^{T}(t_{f})F_{1}x_{\rm r}(t_{f}) + \int_{0}^{t_{f}}\big[x_{\rm r}^{T}(t)D_{1}(t)x_{\rm r}(t)\\
+ y_{\rm r,1}^{T}(t)\Lambda(t)y_{\rm r,1}(t) - v_{\rm r}^{T}(t)G(t)v_{\rm r}(t)\big]dt,\end{array}
\end{equation}
where $y_{\rm r,1}(t) \in E^{m_{1}}$ is the upper block of the control vector $y_{\rm r}(t)$ and it is the minimizer's control in the game (\ref{reduced-eq-x-new})-(\ref{reduced-funct-new}).
Since the matrix $\Lambda(t)$ is positive definite for all $t \in [0,t_{f}]$, the game (\ref{reduced-eq-x-new})-(\ref{reduced-funct-new}) is regular, i.e., the Isaacs’s MinMax principle and the Bellman-Isaacs equation method can be directly used for its solution.
We call this game the Reduced Differential Game (RDG). Thus, {\bf due to the assumption A2, the game  (\ref{reduced-eq-x}),(\ref{reduced-funct-2}) becomes the regular RDG}.\\
For all $(x_{\rm r},t) \in E^{n}\times[0,t_{f}]$, consider the functions
\begin{equation}\label{y_r1^*-v_r^*}\begin{array}{c}y_{\rm r,1}^{*}(x_{\rm r},t) = - \Lambda^{-1}(t)\bar{A}_{2}(t)K_{1,0}^{o}(t)x_{\rm r}\\ v_{\rm r}^{*}(x_{\rm r},t) = G^{-1}(t)C_{1}(t) K_{1,0}^{o}(t)x_{\rm r},\end{array}\end{equation}
where $K_{1,0}^{o}(t)$ is the solution of the problem (\ref{eq-K_10^o}).
Taking into account that $S_{v,1}(t) = C_{1}(t)G^{-1}(t)C_{1}^{T}$ and using the results of \cite{Basar-Olsder,Glizer-Kelis-book}, we obtain the assertion.
\begin{proposition}\label{solvability-RDG}Let the assumption A4 be fulfilled. Then, the pair $\big(y_{\rm r,1}^{*}(x_{\rm r},t),v_{\rm r}^{*}(x_{\rm r},t)\big)$ is the saddle point of the RDG. The value of this game has the form 
\begin{equation}\label{game-value-reduced}\widehat{J}_{\rm r}^{*} = \widehat{J}_{\rm r}\big(y_{\rm r,1}^{*}(x_{\rm r},t),v_{\rm r}^{*}(x_{\rm r},t)\big) = x_{0}^{T}K_{1,0}^{o}(0)x_{0}.
\end{equation}
\end{proposition}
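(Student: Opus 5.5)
The plan is to treat the RDG (\ref{reduced-eq-x-new})-(\ref{reduced-funct-new}) as a standard regular finite-horizon zero-sum linear-quadratic game. Its state is $x_{\rm r}\in E^{n}$, its dynamics matrix is $A_{1}(t)=\bar{A}_{1}(t)$, the minimizer's input matrix is $\bar{A}_{2}(t)$ with control weight $\Lambda(t)$, and the maximizer's input matrix is $C_{1}(t)$ with weight $G(t)$. The state cost is $D_{1}(t)$ and the terminal cost is $F_{1}$. Since $\Lambda(t)$ and $G(t)$ are positive definite and continuous on $[0,t_{f}]$, the general theory cited (\cite{Basar-Olsder,Glizer-Kelis-book}), the same one used for Proposition \ref{solvability-K}, applies. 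It gives the following: if the associated game-theoretic Riccati terminal-value problem has a symmetric solution on the whole interval, then the linear feedbacks built from it form a saddle point, and the value is the quadratic form of the initial state. So the proof reduces to identifying that Riccati problem with (\ref{eq-K_10^o}) and the feedbacks with (\ref{y_r1^*-v_r^*}).

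First I write the Riccati equation of the RDG in the generic form
$dP/dt=-PA_{1}-A_{1}^{T}P+P[\bar{A}_{2}\Lambda^{-1}\bar{A}_{2}^{T}-C_{1}G^{-1}C_{1}^{T}]P-D_{1}$, $P(t_{f})=F_{1}$.
Using $S_{v,1}(t)=C_{1}(t)G^{-1}(t)C_{1}^{T}(t)$, which follows from the block partition (\ref{block-form-A-C}) of $S_{v}=CG^{-1}C^{T}$, this is exactly (\ref{eq-K_10^o}). Assumption A4 then gives the solution $P=K_{1,0}^{o}$ on $[0,t_{f}]$. Symmetry follows because the transposed function solves the same problem and the solution is unique.

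Next I verify the saddle-point property directly by completing squares, so as not to rely only on a citation. Take any admissible pair $(y_{\rm r,1},v_{\rm r})$ and differentiate $x_{\rm r}^{T}K_{1,0}^{o}x_{\rm r}$ along (\ref{reduced-eq-x-new}). Integrating over $[0,t_{f}]$ and adding to $\widehat{J}_{\rm r}$ yields the identity
\[
\widehat{J}_{\rm r}=x_{0}^{T}K_{1,0}^{o}(0)x_{0}+\int_{0}^{t_{f}}\big[(y_{\rm r,1}-y_{\rm r,1}^{*})^{T}\Lambda(y_{\rm r,1}-y_{\rm r,1}^{*})-(v_{\rm r}-v_{\rm r}^{*})^{T}G(v_{\rm r}-v_{\rm r}^{*})\big]dt,
\]
where $y_{\rm r,1}^{*},v_{\rm r}^{*}$ are evaluated along the actual trajectory. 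The cross terms cancel precisely because $K_{1,0}^{o}$ satisfies (\ref{eq-K_10^o}). This step relies on the minimizer's feedback being $-\Lambda^{-1}\bar{A}_{2}^{T}K_{1,0}^{o}x_{\rm r}$ and the maximizer's being $G^{-1}C_{1}^{T}K_{1,0}^{o}x_{\rm r}$, with transposes on $\bar{A}_{2}$ and $C_{1}$ as the dimensions require; I would read (\ref{y_r1^*-v_r^*}) that way.

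Setting $y_{\rm r,1}=y_{\rm r,1}^{*}$ gives $\widehat{J}_{\rm r}\le x_{0}^{T}K_{1,0}^{o}(0)x_{0}$ for every admissible $v_{\rm r}$. Setting $v_{\rm r}=v_{\rm r}^{*}$ gives the reverse inequality for every admissible $y_{\rm r,1}$. Together these give the equality of guaranteed results and the value formula (\ref{game-value-reduced}). Admissibility of the pair itself is immediate: the feedbacks are linear with continuous coefficients, so the closed-loop system has a unique absolutely continuous solution and the controls lie in $L^{2}$.

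The only delicate point is the bookkeeping. One must confirm that the admissible-pair framework of Definitions \ref{admis-pair}--\ref{equilibr-SDG}, transplanted to the RDG with state $x_{\rm r}$ and $y_{\rm r,1}$ as the minimizer's control, fits the cited theorem. One must also confirm that, under A2, the zero block $\bar{A}_{3}$ makes $y_{\rm r,2}$ drop out of both the dynamics and the cost, which is what makes the reduced game regular. Beyond this the argument is routine.
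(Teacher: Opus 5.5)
Your proposal is correct and follows the paper's route. The paper notes $S_{v,1}(t)=C_{1}(t)G^{-1}(t)C_{1}^{T}(t)$, recognizes (\ref{eq-K_10^o}) as the game-theoretic Riccati problem of the RDG, and invokes the standard LQ-game results of \cite{Basar-Olsder,Glizer-Kelis-book}; your completing-the-squares identity is a self-contained version of that cited result, and you are right to read $y_{\rm r,1}^{*}$ and $v_{\rm r}^{*}$ with $\bar{A}_{2}^{T}(t)$ and $C_{1}^{T}(t)$, since the formulas as printed are dimensionally inconsistent.
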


\subsection{Justification of the Asymptotic Solution to the Problem (\ref{equiv-eq-K1})--(\ref{equiv-eq-K6}), (\ref{termin-cond})}

By a straightforward generalization of the results of \cite{Glizer-Turetsky-Symmetry} to the terminal-value problem (\ref{equiv-eq-K1})--(\ref{equiv-eq-K6}), (\ref{termin-cond})), we obtain the assertion.
\begin{lemma}\label{justification-asymptotics-K_1-6}
Let the assumptions A2-A4 be fulfilled. Then, there exists a number $\varepsilon_{0}>0$ such that, for all $\varepsilon\in(0,\varepsilon_{0}]$, the problem (\ref{equiv-eq-K1})--(\ref{equiv-eq-K6}), (\ref{termin-cond}) has the unique solution $\{K_{\alpha}(t,\varepsilon),\ (\alpha = 1,...,6)\}$, in the entire interval $[0,t_f]$.
This solution satisfies the inequalities $\big\|K_{\alpha}(t,\varepsilon) - K_{\alpha,0}(t,\varepsilon)\big\| \leq
a\varepsilon$, $t \in [0,t_f]$, where $K_{\alpha,0}(t,\varepsilon)$, $(\alpha = 1,...,6)$, are given by (\ref{asymp-K}); $a > 0$ is some constant independent of $\varepsilon$.
\end{lemma}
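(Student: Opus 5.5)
We argue by the standard route for singularly perturbed Riccati problems: substitute the zero-order asymptotic solution, estimate the residual, and close a fixed-point argument for the remainder using the exponential stability of the fast linearization. Set $K_{\alpha}(t,\varepsilon) = K_{\alpha,0}(t,\varepsilon) + \Delta_{\alpha}(t,\varepsilon)$, $(\alpha = 1,\dots,6)$, and substitute into (\ref{equiv-eq-K1})--(\ref{equiv-eq-K6}), (\ref{termin-cond}). By construction, $K_{\alpha,0}^{o}$ and $K_{\alpha,0}^{b}$ satisfy the zero-order equations (\ref{eq-outer-terms-K1})--(\ref{eq-outer-terms-K6}) and the boundary-layer problems, and the terminal conditions for $\Delta_{\alpha}$ are homogeneous. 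Using A3 (so that $K^{o}_{\alpha,0}$ are $C^{1}$ and $\Lambda(t) - \Lambda(t_f) = O(t_f-t) = O(\varepsilon|\tau|)$) together with the exponential decay of $K^{b}_{2,0}, K^{b}_{4,0}, K^{b}_{5,0}$, we show that the residuals are $O(\varepsilon)$ in the slow equations. In the fast equations they are of the form $\varepsilon\,O(1) + O\big(\varepsilon|\tau| e^{\beta\tau}\big)$, and after integration against the fast fundamental matrices these again contribute $O(\varepsilon)$. Here A2 is used essentially: it removes the term $K_{1}\bar A_{3}$ from the $K_3$ equation, so the $K_3$ block remains slow with $K_{3,0}^{o}\equiv 0$.

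Next we write the system for $\Delta = (\Delta_1,\dots,\Delta_6)$ as a linear singularly perturbed system plus quadratic terms plus the residual. The linear part has slow blocks $(\Delta_1,\Delta_3,\Delta_6)$ with continuous coefficients. Its fast blocks are governed by
$$\varepsilon\, d\Delta_4/dt = K_{4,0}\Delta_4 + \Delta_4 K_{4,0},\qquad \varepsilon\, d\Delta_2/dt = \Delta_2 K_{4,0}+\dots,\qquad \varepsilon\, d\Delta_5/dt = K_{4,0}\Delta_5+\dots,$$
where $K_{4,0}(t,\varepsilon) = \Lambda^{1/2}(t) + K^{b}_{4,0}(\tau)$. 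The key estimate is the bound on the fundamental matrices $\Phi(t,s,\varepsilon)$ of $\varepsilon\,dw/dt = K_{4,0}w$ for $t\le s$ (integration backward from $t_f$):
$$\|\Phi(t,s,\varepsilon)\| \le c\exp\big(-\gamma(s-t)/\varepsilon\big).$$
We prove this using the explicit form (\ref{K_20^b=K_40^b-K_50^b}). At the frozen time $t_f$, the matrix $\Lambda^{1/2}(t_f)+K^b_{4,0}(\tau) = \Lambda^{1/2}(t_f)\big(I - 2\mathcal H_1^2\mathcal H_2\big) = \Lambda^{1/2}(t_f)(I-\mathcal H_1^2)(I+\mathcal H_1^2)^{-1}$ is diagonal and nonnegative, and positive away from $\tau=0$. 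The solution can therefore be written explicitly as diagonal scalar factors of the form $\cosh$-ratios, which are bounded by $c\,e^{-\gamma(s-t)/\varepsilon}$. The $t$-dependence of $\Lambda$ is then handled as an $O(\varepsilon|\tau|)$ perturbation by Gronwall's inequality.

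With these bounds we convert the problem for $\Delta$ into an equivalent system of integral equations. The fast components are expressed through $\varepsilon^{-1}\int \Phi\,(\cdots)\,ds$, whose kernel has $O(1)$ norm, and the slow components through ordinary Volterra integrals. On the ball $\{\sup_t\|\Delta\|\le a\varepsilon\}$ in $C[0,t_f]$, the quadratic terms contribute $O(\varepsilon^2)$ and the residual contributes $O(\varepsilon)$. There is, however, a linear coupling from fast to slow components; for example, in the $\Delta_1$ equation the term $\Delta_2 K_{2,0}^T + K_{2,0}\Delta_2^T$ enters with an $O(1)$ coefficient. We handle this by first solving the fast equations for $\Delta_{2,4,5}$ in terms of the slow ones and then substituting, which yields a Volterra system for the slow components with bounded kernel. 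Gronwall's inequality then gives a contraction for small $\varepsilon$, possibly after choosing a weighted norm. Banach's fixed-point theorem produces a unique solution with $\|\Delta_\alpha\|\le a\varepsilon$ on $[0,t_f]$. In particular the solution exists on the entire interval, and uniqueness follows from the local Lipschitz property of the right-hand side.

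The main obstacle is the fast-mode estimate near $\tau = 0$. There $K_{4,0}(t_f,\varepsilon)=\Lambda^{1/2}(t_f)-\Lambda^{1/2}(t_f) = 0$, so the linearization is not uniformly stable; it degenerates at the terminal point. We handle this using the exact boundary-layer solution: the fundamental matrix of the frozen equation is the explicit ratio $\cosh(\sqrt{\lambda}\,\tau)/\cosh(\sqrt{\lambda}\,\sigma)$, which is bounded by $2e^{-\sqrt\lambda(\sigma-\tau)}$. This plays the role of the uniform exponential estimate, exactly as in the scalar-like analysis of \cite{Glizer-Turetsky-Symmetry}, which we generalize blockwise.
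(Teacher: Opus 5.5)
Your overall scheme matches the standard justification that the paper only invokes by reference to \cite{Glizer-Turetsky-Symmetry}. That scheme is: remainder $\Delta_\alpha=K_\alpha-K_{\alpha,0}$, exponential bounds for the fast fundamental matrices, integral equations in which the fast components are substituted into the slow Volterra equations, then a fixed-point argument. Your handling of the $O(1)$ fast-to-slow coupling is also correct.

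\textbf{The fast fundamental-matrix estimate.} You freeze $\Lambda$ at $t_f$ and treat $\Lambda^{1/2}(t)-\Lambda^{1/2}(t_f)$ as an $O(\varepsilon|\tau|)$ perturbation via Gronwall. This step fails. Over a stretched interval of length $T=(t_f-t)/\varepsilon$, Gronwall gives a factor of order $\exp(L\varepsilon T^2)=\exp\big(L(t_f-t)^2/\varepsilon\big)$, where $L$ is a Lipschitz constant of $\Lambda^{1/2}$. This swamps the decay $\exp\big(-\beta(t_f-t)/\varepsilon\big)$ once $t_f-t>\beta/L$; away from $t_f$ the perturbation is $O(1)$, not small.

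Split the other way instead. Keep $\Lambda^{1/2}(t)\ge\bar\beta I_{m_1}$ as the principal part and treat $K^b_{4,0}(\tau)$ as the perturbation, since it is integrable in stretched time. Because all matrices here are diagonal, $\Phi(t,s,\varepsilon)=\exp\big(-\varepsilon^{-1}\int_t^s K_{4,0}(r,\varepsilon)\,dr\big)$ exactly. Moreover, $\varepsilon^{-1}\int_t^s$ of each diagonal entry of $-K^b_{4,0}$ is at most $\int_{-\infty}^{0}2\mu e^{2\mu\rho}(1+e^{2\mu\rho})^{-1}d\rho=\ln 2$. Hence $\|\Phi(t,s,\varepsilon)\|\le 2\exp\big(-\bar\beta(s-t)/\varepsilon\big)$ on all of $[0,t_f]$, including the degeneracy at $\tau=0$. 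Your frozen ratio is also inverted: propagating backward from $\sigma$ to $\tau\le\sigma\le 0$ gives $\cosh(\mu\sigma)/\cosh(\mu\tau)\le 2e^{-\mu(\sigma-\tau)}$.

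\textbf{The slow residuals.} These are not $O(\varepsilon)$ pointwise, as you claim. With $K_{3,0}\equiv 0$ and $K^o_{5,0}=\bar{A}_6$:
\begin{itemize}
\item the residual of the $K_1$-equation contains $K^o_{2,0}(K^b_{2,0})^T+K^b_{2,0}(K^o_{2,0})^T+K^b_{2,0}(K^b_{2,0})^T$;
\item that of the $K_3$-equation contains $K_{2,0}K^b_{5,0}$;
\item that of the $K_6$-equation contains $(K^b_{5,0})^TK^b_{5,0}$.
\end{itemize}
These terms are $O(1)$ in the layer $t_f-t=O(\varepsilon)$; for example, the $K_6$-residual at $t=t_f$ equals $\pm\bar{A}_6^T(t_f)\bar{A}_6(t_f)$. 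What does hold, by the exponential decay in $\tau$, is that the $L^1$-norm of these terms over $[t,t_f]$ is $O(\varepsilon)$ uniformly in $t$. That is all your Volterra formulation needs. With the residual bound stated in this integrated form and the corrected fundamental-matrix estimate, the rest of your argument goes through.
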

Lemma \ref{justification-asymptotics-K_1-6} directly yields the assertion.
\begin{corollary}\label{saddle-point-existence}Let the assumptions A2-A4 be fulfilled. Then, for any $\varepsilon\in(0,\varepsilon_{0}]$, the assumption A1 is fulfilled and $K(t,\varepsilon)$ has the form (\ref{block-form}). Furthermore, all the statements of Proposition \ref{solvability-K} are valid.
\end{corollary}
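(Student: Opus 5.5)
The plan is to build $K(t,\varepsilon)$ directly from the solution supplied by Lemma \ref{justification-asymptotics-K_1-6} and then check that it solves (\ref{eq-K}). Fix $\varepsilon\in(0,\varepsilon_{0}]$. By Lemma \ref{justification-asymptotics-K_1-6}, the problem (\ref{equiv-eq-K1})--(\ref{equiv-eq-K6}), (\ref{termin-cond}) has a unique solution $\{K_{\alpha}(t,\varepsilon)\}_{\alpha=1}^{6}$ on all of $[0,t_{f}]$.

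\emph{Step 1: symmetry of the diagonal blocks.} First I will show that $K_{1}$, $K_{4}$, $K_{6}$ are symmetric. Transposing (\ref{equiv-eq-K1}), (\ref{equiv-eq-K4}), (\ref{equiv-eq-K6}) gives a system of the same form, because the functions ${\mathcal G}_{\alpha}$ inherit the symmetry structure of (\ref{eq-K}). The terminal data $F_{1}$ and $0$ are symmetric. Hence the set $\{K_{1}^{T},K_{2},K_{3},K_{4}^{T},K_{5},K_{6}^{T}\}$ solves the same terminal-value problem, and uniqueness forces $K_{j}=K_{j}^{T}$ for $j=1,4,6$. If this transposition symmetry is not transparent from the ${\mathcal G}_{\alpha}$, I will use an alternative argument. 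Namely, solve (\ref{eq-K}) locally near $t_{f}$, where its symmetric solution exists, and observe that its blocks solve the transformed system. Continuation then extends this on the interval where both are defined.

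\emph{Step 2: the assembled matrix solves (\ref{eq-K}).} Define $K(t,\varepsilon)$ by (\ref{block-form}) using these $K_{\alpha}$. This matrix is symmetric and continuously differentiable on $[0,t_{f}]$. The system (\ref{equiv-eq-K1})--(\ref{equiv-eq-K6}) was obtained from (\ref{eq-K}) by substituting (\ref{block-form}) and dividing each block equation by the appropriate power $\varepsilon^{0},\varepsilon^{1},\varepsilon^{2}$. Under A2 this is an equivalence: each step can be reversed for $\varepsilon>0$, and the terminal condition (\ref{termin-cond}) becomes $K(t_{f},\varepsilon)=F$. Therefore $K(t,\varepsilon)$ is a symmetric solution of (\ref{eq-K}) on the whole of $[0,t_{f}]$, which is exactly A1.

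\emph{Step 3: uniqueness and conclusion.} The solution of (\ref{eq-K}) is unique, as noted after A1. So the solution of (\ref{eq-K}) coincides with the assembled matrix and has the form (\ref{block-form}). Since A1 holds, Proposition \ref{solvability-K} applies verbatim for this $\varepsilon$. This yields the saddle point (\ref{u*-v*}), the value (\ref{J^*_eps}), and the inequalities (\ref{saddle-point-ineq}).

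The only delicate point is the symmetry in Step 1, that is, verifying that the transformed system preserves symmetry of the diagonal blocks. The local-solution-and-continuation argument is the fallback if the direct transposition check is unclear. Everything else is bookkeeping of the equivalence of the two problems.
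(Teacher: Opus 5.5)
Your route is the paper's. The paper just says the corollary follows directly from Lemma \ref{justification-asymptotics-K_1-6}, tacitly using that (\ref{eq-K}) and the transformed problem are equivalent under (\ref{block-form}). However, the primary symmetry argument in your Step~1 is wrong. The substitution $(K_1,\dots,K_6)\mapsto(K_1^T,K_2,K_3,K_4^T,K_5,K_6^T)$ preserves the diagonal-block equations (\ref{equiv-eq-K1}), (\ref{equiv-eq-K4}), (\ref{equiv-eq-K6}), but not the off-diagonal ones:
\begin{itemize}
\item (\ref{equiv-eq-K2}) contains $-K_1\bar A_2+K_2K_4$, which becomes $-K_1^T\bar A_2+K_2K_4^T$;
\item (\ref{equiv-eq-K3}) contains $K_3K_6$ and $-K_1S_{v,1}K_3$, and (\ref{equiv-eq-K5}) contains $K_4K_5$, with the same problem.
\end{itemize}
So the transposed tuple solves the same problem only if $K_1,K_4,K_6$ are already symmetric, which is circular. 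Put differently, the transposed tuple assembles to $K^T$. That matrix satisfies the upper-triangular block equations only if $K$ satisfies the lower-triangular ones, and the six-block system does not encode those.

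A correct direct argument is invariance of the symmetric class. When $K_1,K_4,K_6$ are symmetric, the right-hand sides of (\ref{equiv-eq-K1}), (\ref{equiv-eq-K4}), (\ref{equiv-eq-K6}) are symmetric, because they are scaled diagonal blocks of $-KA-A^TK+K(S_u-S_v)K-D$ evaluated at a symmetric $K$. By uniqueness, the solution from symmetric terminal data therefore stays symmetric. This is also how the paper sets things up: it declares $K_1,K_4,K_6$ symmetric in (\ref{block-form}).

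Your fallback is correct, but the phrase ``on the interval where both are defined'' hides the one thing that must be proved. Let $(t^*,t_f]$ be the maximal existence interval of the symmetric solution of (\ref{eq-K}).
\begin{itemize}
\item On $(\max\{t^*,0\},t_f]$, its scaled blocks solve (\ref{equiv-eq-K1})--(\ref{equiv-eq-K6}), (\ref{termin-cond}).
\item By uniqueness for this locally Lipschitz system, they coincide there with the Lemma's solution, which is bounded on $[0,t_f]$.
\item Hence $K$ stays bounded near $t^*$, which rules out $t^*\ge 0$.
\end{itemize}
This gives A1 and the form (\ref{block-form}) at once, so your Step~2 becomes unnecessary. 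Step~3 then goes through as you wrote it.
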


\section{Approximate-Saddle Point of the CCDG}
For any $\varepsilon \in (0,\varepsilon_{0}]$, consider the matrix-valued function
\begin{equation}\label{block-form-K_0}K_{0}(t,\varepsilon) = \left[\begin{array}{l}\  \ K_{1,0}(t,\varepsilon)\ \ \varepsilon K_{2,0}(t,\varepsilon)\ \  \  \ \varepsilon K_{3,0}(t,\varepsilon)\\
\varepsilon K_{2,0}^{T}(t,\varepsilon)\ \ \varepsilon K_{4,0}(t,\varepsilon)\ \  \  \ \varepsilon^{2} K_{5,0}(t,\varepsilon)\\
\varepsilon K_{3,0}^{T}(t,\varepsilon)\ \ \varepsilon^{2} K_{5,0}^{T}(t,\varepsilon)\ \ \ \varepsilon^{2} K_{6,0}(t,\varepsilon)\end{array}\right].
\end{equation}
In (\ref{u*-v*}), let us replace $K(t,\varepsilon)$ with $K_{0}(t,\varepsilon)$. Thus, we obtain the controls of the minimizer and the maximizer in the CCDG
\begin{equation}\label{u_0-v_0}\begin{array}{l}u_{\varepsilon 0}(z,t) = - \varepsilon^{-2}B^{T}(t) K_{0}(t,\varepsilon)z,\\
v_{\varepsilon 0}(z,t) = G^{-1}(t)C^{T}(t)K_{0}(t,\varepsilon)z.\end{array}
\end{equation}
Since $u_{\varepsilon 0}(z,t)$ and $v_{\varepsilon 0}(z,t)$ are linear with respect to $z$ for all $(t,\varepsilon) \in [0,t_{f}]\times(0,\varepsilon_{0}]$, then $\big(u_{\varepsilon 0}(z,t),v_{\varepsilon 0}(z,t)\big) \in UV$.
Employing this pair of controls in the CCDG and using (\ref{new-matrices}),(\ref{S_u-S_v}), yields $J_{\varepsilon 0} \stackrel{\triangle}{=} J\big(u_{\varepsilon 0}(z,t),v_{\varepsilon 0}(z,t)\big)$ as:
\begin{equation}\label{output}\begin{array}{l}J_{\varepsilon 0} = z_{\varepsilon 0}^{T}(t_{f})Fz_{\varepsilon 0}(t_{f})
+ \int_{0}^{t_{f}}z_{\varepsilon 0}^{T}(t){\mathcal D}(t,\varepsilon)z_{\varepsilon 0}(t)dt,\\
{\mathcal D}(t,\varepsilon) \stackrel{\triangle}{=} D(t) + K_{0}(t,\varepsilon)[S_{u}(\varepsilon) - S_{v}(t)]K_{0}(t,\varepsilon),\end{array}
\end{equation}
where $z_{\varepsilon 0}(t)$ satisfies of the initial-value problem
\begin{equation}\label{init-value-pr}\begin{array}{c}dz_{\varepsilon 0}(t)/dt = {\mathcal A}(t,\varepsilon)z_{\varepsilon 0}(t),\ t \in [0,t_{f}],\ z_{\varepsilon 0}(t) = z_{0},\\
{\mathcal A}(t,\varepsilon) \stackrel{\triangle}{=} A(t) - [S_{u}(\varepsilon) - S_{v}(t)]K_{0}(t,\varepsilon).\end{array}
\end{equation}
Similarly to the work \cite{Glizer-Turetsky-Symmetry}, the value $J_{\varepsilon 0}$ can be expressed as:
\begin{equation}\label{output-new-form}J_{\varepsilon 0} = z_{0}^{T}{\mathcal L}(0,\varepsilon)z_{0},\end{equation}
where ${\mathcal L}(t,\varepsilon)$ satisfies the terminal-value problem
\begin{equation}\label{eq-mathcal-L}\begin{array}{l}d{\mathcal L}(t,\varepsilon)/dt = - {\mathcal L}(t,\varepsilon){\mathcal A}(t,\varepsilon) - {\mathcal A}^{T}(t,\varepsilon){\mathcal L}(t,\varepsilon)\\ - {\mathcal D}(t,\varepsilon),\  \  \  \
t \in [0,t_{f}],\  \  \ {\mathcal L}(t_{f},\varepsilon) = F.\end{array}
\end{equation}
Using the symmetry of the matrix ${\mathcal L}(t,\varepsilon)$, we represent it in the block form similar to (\ref{block-form})
\begin{equation}\label{block-form-mathcal-L}{\mathcal L}(t,\varepsilon) = \left[\begin{array}{l}\  \ {\mathcal L}_{1}(t,\varepsilon)\ \ \varepsilon {\mathcal L}_{2}(t,\varepsilon)\ \  \  \ \varepsilon {\mathcal L}_{3}(t,\varepsilon)\\
\varepsilon {\mathcal L}_{2}^{T}(t,\varepsilon)\ \ \varepsilon {\mathcal L}_{4}(t,\varepsilon)\ \  \  \ \varepsilon^{2} {\mathcal L}_{5}(t,\varepsilon)\\
\varepsilon {\mathcal L}_{3}^{T}(t,\varepsilon)\ \ \varepsilon^{2} {\mathcal L}_{5}^{T}(t,\varepsilon)\ \ \varepsilon^{2} {\mathcal L}_{6}(t,\varepsilon)\end{array}\right].
\end{equation}
\begin{lemma}\label{estim-K-L}Let the assumptions A2-A4 be fulfilled. Then, there exists a positive number $\varepsilon_{1}\le \varepsilon_{0}$ such that, for all $\varepsilon\in(0,\varepsilon_{1}]$, the following inequalities are valid: $\big\|K_{\alpha}(t,\varepsilon) - {\mathcal L}_{\alpha}(t,\varepsilon)\big\| \le c\varepsilon^{2}$, $t \in [0,t_{f}]$, $(\alpha = 1,...,6)$, where $c > 0$ is some constant independent of $\varepsilon$.
\end{lemma}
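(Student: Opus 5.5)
The plan is to compare $K$ and ${\mathcal L}$ through the difference $\Delta(t,\varepsilon)\stackrel{\triangle}{=}K(t,\varepsilon)-{\mathcal L}(t,\varepsilon)$, which satisfies a \emph{linear} terminal-value problem with a forcing term that is quadratic in $K-K_{0}$. Write $S(t,\varepsilon)=S_{u}(\varepsilon)-S_{v}(t)$. Then $K$ solves (\ref{eq-K}), which can be written as
\[
dK/dt=-K{\mathcal A}-{\mathcal A}^{T}K-{\mathcal D}-(K-K_{0})S(K-K_{0}).
\]
To see this, expand $-KA-A^{T}K+KSK-D$, replace $A$ by ${\mathcal A}+SK_{0}$, and collect terms. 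Subtracting (\ref{eq-mathcal-L}) gives
\[
d\Delta/dt=-\Delta{\mathcal A}-{\mathcal A}^{T}\Delta-(K-K_{0})S(t,\varepsilon)(K-K_{0}),\qquad \Delta(t_{f},\varepsilon)=0.
\]
Hence $\Delta(t,\varepsilon)=\int_{t}^{t_{f}}\Phi^{T}(s,t,\varepsilon)(K-K_{0})S(K-K_{0})\Phi(s,t,\varepsilon)\,ds$, where $\Phi(s,t,\varepsilon)$ is the transition matrix of $dz/ds={\mathcal A}(s,\varepsilon)z$.

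The next step handles the forcing term blockwise. By Lemma \ref{justification-asymptotics-K_1-6} and (\ref{block-form}), (\ref{block-form-K_0}), the difference $K-K_{0}$ has the block structure (\ref{block-form}) with every block factor $K_{\alpha}-K_{\alpha,0}$ of order $\varepsilon$. Denote this block-structured matrix by $\varepsilon\,\Omega(t,\varepsilon)$, with $\|\Omega_{\alpha}\|\le a$. The dangerous part is $\varepsilon^{-2}(K-K_{0})BB^{T}(K-K_{0})$. Because of the factors $\varepsilon$ on the lower blocks, $B^{T}(K-K_{0})$ is $\varepsilon\cdot O(\varepsilon)$ in every column. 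So $\varepsilon^{-2}(K-K_{0})BB^{T}(K-K_{0})$ is $O(\varepsilon^{2})$ and has the same scaled structure: $O(\varepsilon^{2})$ in the $(1,1)$ block, $\varepsilon\cdot O(\varepsilon^{2})$ in the blocks where (\ref{block-form}) carries $\varepsilon$, and $\varepsilon^{2}\cdot O(\varepsilon^{2})$ where it carries $\varepsilon^{2}$. The $S_{v}$ part is even smaller. Thus the forcing has the form (\ref{block-form}) with block factors of order $\varepsilon^{2}$.

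The main obstacle is the transition matrix $\Phi$. Since ${\mathcal A}$ contains $\varepsilon^{-1}$ terms, $\Phi$ is not uniformly bounded in an unstructured way. I would avoid $\Phi$ and instead pass to the block form (\ref{block-form-mathcal-L}) for $\Delta$, writing $\Delta_{\alpha}=K_{\alpha}-{\mathcal L}_{\alpha}$. The linear system for $\Delta_{\alpha}$, $\alpha=1,\dots,6$, then has the same singularly perturbed structure as the linearization of (\ref{equiv-eq-K1})--(\ref{equiv-eq-K6}) around $K_{\alpha,0}$. It is slow for $\Delta_{1},\Delta_{3},\Delta_{6}$ and fast, with multiplier $\varepsilon$ on the derivative, for $\Delta_{2},\Delta_{4},\Delta_{5}$. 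The fast parts are governed by the matrix $K_{4,0}(t,\varepsilon)=\Lambda^{1/2}(t)+K^{b}_{4,0}(\tau)$, whose eigen-structure yields exponential stability in the backward direction. This holds because $\Lambda^{1/2}(t_f)(I-2{\mathcal H}_1^2{\mathcal H}_2)+\Lambda^{1/2}(t)$ behaves like a positive definite matrix, using ${\mathcal H}_1^2{\mathcal H}_2<I/2$ in the relevant sense, and this is exactly the estimate already used in proving Lemma \ref{justification-asymptotics-K_1-6}.

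The fast subsystem's transition matrices will therefore be bounded by $c\exp(-\gamma(s-t)/\varepsilon)$, and I would then apply the standard estimate for linear singularly perturbed systems from \cite{Glizer-Turetsky-Symmetry}, \cite{Glizer-Kelis-book}. A forcing of order $\varepsilon^{2}$ in all blocks gives a solution of order $\varepsilon^{2}$ in all blocks; on the fast equations this works because the $\varepsilon^{-1}$ from the derivative is compensated by integrating against the $\varepsilon$-scaled exponential kernel. Taking $\varepsilon_{1}\le\varepsilon_{0}$ small enough that these kernel bounds hold yields $\|K_{\alpha}-{\mathcal L}_{\alpha}\|\le c\varepsilon^{2}$.
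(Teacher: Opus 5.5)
Your argument is sound in outline, and your first half matches the paper: the same difference $K-{\mathcal L}$, the same linear terminal-value problem and the same integral representation. There is one sign slip: collecting terms gives $dK/dt=-K{\mathcal A}-{\mathcal A}^{T}K-{\mathcal D}+(K-K_0)S(K-K_0)$, with a plus, as in the paper. This is irrelevant for norm estimates.

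The routes split at the estimation step. The paper keeps (\ref{solution-Delta-K_L}), proves the structured block bounds (\ref{estimates-Phi-blocks}) for the closed-loop transition matrix $\Phi$, in which only the $y_1$-block is fast, and then multiplies out. Contrary to your remark, $\Phi$ \emph{is} uniformly bounded. What a crude bound misses is the $O(\varepsilon)$ size of $\Phi_2,\Phi_3,\Phi_6,\Phi_8$ and the $O(\varepsilon)$ integral of $\Phi_5$; these supply the extra powers of $\varepsilon$ in the lower blocks.

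You instead treat $(\Delta_1,\dots,\Delta_6)$ as the unknowns of the linearization of (\ref{equiv-eq-K1})--(\ref{equiv-eq-K6}) at $K_{\alpha,0}$. This is correct, since $\Delta\mapsto-\Delta{\mathcal A}-{\mathcal A}^{T}\Delta$ is exactly the derivative of the Riccati right-hand side at $K_0$. You then use a uniform bound on the solution operator of this larger singularly perturbed system, whose fast part is $(\Delta_2,\Delta_4,\Delta_5)$. Your route reuses the machinery behind Lemma \ref{justification-asymptotics-K_1-6} and reduces the bookkeeping to one check on the forcing. The paper's route needs only a fundamental-matrix lemma for the $(n+m)$-dimensional closed-loop system and is more explicit.

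That one check, however, is done incorrectly. Write $\delta_\alpha=K_\alpha-K_{\alpha,0}=O(\varepsilon)$. The blocks of $\varepsilon^{-2}(K-K_0)BB^{T}(K-K_0)$ in positions $(1,2)$, $(2,2)$, $(2,3)$ are $\delta_2\delta_4+\varepsilon\delta_3\delta_5^{T}$, $\delta_4^{2}+\varepsilon^{2}\delta_5\delta_5^{T}$ and $\varepsilon(\delta_4\delta_5+\varepsilon\delta_5\delta_6)$. These are $\varepsilon\cdot O(\varepsilon)$, $\varepsilon\cdot O(\varepsilon)$ and $\varepsilon^{2}\cdot O(\varepsilon)$. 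So in exactly the fast positions the block factors are only $O(\varepsilon)$, not $O(\varepsilon^{2})$; only the $(1,1)$, $(1,3)$, $(3,3)$ blocks have the structure you claim.

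The conclusion still holds once you count forcing in the normalized equations. Let $Q=(K-K_0)S(K-K_0)$ and let $[Q]_{ij}$ be its $(i,j)$ block. The normalized fast equations are $\varepsilon\,d\Delta_2/dt=\dots+[Q]_{12}$, $\varepsilon\,d\Delta_4/dt=\dots+[Q]_{22}$ and $\varepsilon\,d\Delta_5/dt=\dots+\varepsilon^{-1}[Q]_{23}$, and their forcings are $O(\varepsilon^{2})$. The slow forcings $[Q]_{11}$, $\varepsilon^{-1}[Q]_{13}$, $\varepsilon^{-2}[Q]_{33}$ are also $O(\varepsilon^{2})$. The uniform estimate (solution) $\le c\,$(normalized forcing) then gives $O(\varepsilon^{2})$ for every $\Delta_\alpha$. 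Your overclaim had slack that the true structure uses up exactly, so you must not treat the fast block \emph{factors} as the forcing.

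Your stability justification also needs repair. $K_{4,0}(t,\varepsilon)=\Lambda^{1/2}(t)-2\Lambda^{1/2}(t_f){\mathcal H}_1^{2}{\mathcal H}_2$ vanishes at $t=t_f$, because ${\mathcal H}_1^{2}{\mathcal H}_2=I/2$ at $\tau=0$. So it is not uniformly positive definite, and the exponential bound on the fast transition matrices must come from an integrated estimate instead. The identity $\Lambda^{1/2}(t_f)(I-2{\mathcal H}_1^{2}{\mathcal H}_2)=-\Lambda^{1/2}(t_f)\tanh(\Lambda^{1/2}(t_f)\tau)$ yields ratios of $\cosh$'s bounded by $2\exp(\cdot)$. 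The $O(t_f-t)$ drift of $\Lambda^{1/2}(t)$ is absorbed for small $\varepsilon$. This is the same mechanism that produces the exponential term in the paper's bound on $\Phi_5$.
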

\begin{pf} Let us introduce the matrix-valued function
\begin{equation}\label{Delta-K_L}\Delta K_{\mathcal L}(t,\varepsilon) \stackrel{\triangle}{=} K(t,\varepsilon) - {\mathcal L}(t,\varepsilon),\ t \in [0,t_{f}],\ \varepsilon \in (0,\varepsilon_{0}].\end{equation}
Using (\ref{eq-K}),(\ref{eq-mathcal-L}) and the notation $\Delta K_{0}(t,\varepsilon) \stackrel{\triangle}{=} K(t,\varepsilon) - K_{0}(t,\varepsilon)$, we obtain that, for any $\varepsilon \in (0,\varepsilon_{0}]$,
$\Delta K_{\mathcal L}(t,\varepsilon)$ satisfies the terminal-value problem in the interval $[0,t_{f}]$
$$\begin{array}{l}d\Delta K_{\mathcal L}(t,\varepsilon)/dt = - \Delta K_{\mathcal L}(t,\varepsilon){\mathcal A}(t,\varepsilon) - {\mathcal A}^{T}(t,\varepsilon)\Delta K_{\mathcal L}(t,\varepsilon)\\
+ \Delta K_{0}(t,\varepsilon)[S_{u}(\varepsilon) - S_{v}(t)]\Delta K_{0}(t,\varepsilon),\  \ \Delta K_{\mathcal L}(t,\varepsilon) = 0.\end{array}$$
By virtue of the results of \cite{A_F_I_J}, we have the solution of this problem as:
\begin{equation}\label{solution-Delta-K_L}\begin{array}{l}\Delta K_{\mathcal L}(t,\varepsilon) = \int_{t_{f}}^{t}\Phi^{T}(\sigma,t,\varepsilon)\Delta K_{0}(\sigma,\varepsilon)\big[S_{u}(\varepsilon)\\ - S_{v}(\sigma)\big]\Delta K_{0}(\sigma,\varepsilon)\Phi(\sigma,t,\varepsilon)d\sigma,\end{array}
\end{equation}
where for any $t \in [0,t_{f}]$ and $\varepsilon \in (0,\varepsilon_{0}]$, the $(n+m)\times(n+m)$-matrix-valued function $\Phi(\sigma,t,\varepsilon)$ is the solution of the following problem in the interval $\sigma \in [t,t_{f}]$:
$$d\Phi(\sigma,t,\varepsilon)/d\sigma = {\mathcal A}(\sigma,\varepsilon)\Phi(\sigma,t,\varepsilon),\ \ \Phi(t,t,\varepsilon) = I_{n+m}.$$

To estimate $\Phi(\sigma,t,\varepsilon)$, let us partition it into blocks as:
\begin{equation}\label{Phi-block-form}\Phi(\sigma,t,\varepsilon) = \left[\begin{array}{l}\Phi_{1}(\sigma,t,\varepsilon)\ \ \Phi_{2}(\sigma,t,\varepsilon)\ \ \Phi_{3}(\sigma,t,\varepsilon)\\
\Phi_{4}(\sigma,t,\varepsilon)\ \ \Phi_{5}(\sigma,t,\varepsilon)\ \ \Phi_{6}(\sigma,t,\varepsilon)\\
\Phi_{7}(\sigma,t,\varepsilon)\ \ \Phi_{8}(\sigma,t,\varepsilon)\ \ \Phi_{9}(\sigma,t,\varepsilon)\end{array}\right],
\end{equation}
where the blocks are of the same dimensions as the corresponding blocks in (\ref{block-form}).\\
Using the block form of the matrix ${\mathcal A}(t,\varepsilon)$ (see the equations (\ref{init-value-pr}) and (\ref{S_u-S_v}),(\ref{block-form-A-C}),(\ref{block-form-K_0})), as well as
the equations (\ref{lambda_k}),(\ref{mathcal-K^b_js}),(\ref{outer-solution}),(\ref{K_30^o}),(\ref{Phi-block-form}) and the assumption A2, we obtain the following estimates for all $0 \le t \le \sigma \le t_{f}$:
\begin{equation}\label{estimates-Phi-blocks}\begin{array}{l}\big\|\Phi_{\kappa_{1}}(\sigma,t,\varepsilon)\big\| \le c_{\Phi},\ \ \kappa_{1} = 1,4,7,9,\  \ \varepsilon \in (0,\varepsilon_{1}],\\
\big\|\Phi_{\kappa_{2}}(\sigma,t,\varepsilon)\big\| \le c_{\Phi}\varepsilon,\ \ \kappa_{2} = 2,3,6,8\  \ \varepsilon \in (0,\varepsilon_{1}],\\
\big\|\Phi_{5}(\sigma,t,\varepsilon)\big\| \le c_{\Phi}\big[\varepsilon + \exp\big(\bar{\beta}(t - \sigma)/\varepsilon\big)\big],\ \varepsilon \in (0,\varepsilon_{1}],\end{array}
\end{equation}
where $0 < \varepsilon_{1} \le \varepsilon_{0}$ is some sufficiently small number; $c_{\Phi} > 0$ is some constant independent of $\varepsilon$; $\bar{\beta} = \min_{t \in [0,t_{f}]}\min_{p\in \{1,...,m_{1}\}}\sqrt{\lambda_{p}(t)} > 0$.\\
Now, using the equations (\ref{Delta-K_L}),(\ref{solution-Delta-K_L}), the block forms of the matrices $S_{u}(\varepsilon)$, $S_{v}(t)$, $K(t,\varepsilon)$, $K_{0}(t,\varepsilon)$, ${\mathcal L}(t,\varepsilon)$, $\Phi(t,\varepsilon)$ (see the equations (\ref{S_u-S_v}),(\ref{block-form-A-C}),(\ref{block-form}),(\ref{block-form-K_0}),(\ref{block-form-mathcal-L}),(\ref{Phi-block-form})), as well as Lemma \ref{justification-asymptotics-K_1-6} and the estimates (\ref{estimates-Phi-blocks}), we obtain by a routine algebra the validity of the inequalities stated in the lemma.
\end{pf}
\begin{theorem}\label{J_eps*-J_eps0}Let the assumptions A2-A4 be fulfilled. Then, for all $\varepsilon \in (0,\varepsilon_{1}]$, the following inequality is valid: $\big|J_{\varepsilon}^{*} - J_{\varepsilon 0}\big| \le c\varepsilon^{2}\psi(z_{0},\varepsilon)$, where $\psi(z_{0},\varepsilon) \stackrel{\triangle}{=} \|z_{0,1}\|^{2} + \varepsilon(2\|z_{0,1}\|\|z_{0,2}\| + \|z_{0,2}\|^{2} + 2\|z_{0,1}\|\|z_{0,3}\|) + \varepsilon^{2}(2\|z_{0,2}\|\|z_{0,3}\|\\ + \|z_{0,3}\|^{2}),$ $z_{0,1} \in E^{n}$, $z_{0,2} \in E^{m_{1}}$, $z_{0,3} \in E^{m-m_{1}}$, ${\rm col}(z_{0,1},z_{0,2},z_{0,3}) = z_{0}$.
\end{theorem}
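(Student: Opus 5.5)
The plan is to reduce the theorem to Lemma \ref{estim-K-L} through the two quadratic-form representations of the costs. By Corollary \ref{saddle-point-existence} and Proposition \ref{solvability-K}, for $\varepsilon\in(0,\varepsilon_{1}]\subset(0,\varepsilon_{0}]$ we have $J_{\varepsilon}^{*}=z_{0}^{T}K(0,\varepsilon)z_{0}$. By (\ref{output-new-form}), $J_{\varepsilon 0}=z_{0}^{T}{\mathcal L}(0,\varepsilon)z_{0}$. Consequently
$$J_{\varepsilon}^{*}-J_{\varepsilon 0}=z_{0}^{T}\Delta K_{\mathcal L}(0,\varepsilon)z_{0},$$
with $\Delta K_{\mathcal L}$ as in (\ref{Delta-K_L}).

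Next, I would use the block representations (\ref{block-form}) and (\ref{block-form-mathcal-L}), which share the same $\varepsilon$-scaling pattern. The difference $\Delta K_{\mathcal L}(0,\varepsilon)$ is then a symmetric block matrix with blocks $\Delta_{1}$, $\varepsilon\Delta_{2}$, $\varepsilon\Delta_{3}$, $\varepsilon\Delta_{4}$, $\varepsilon^{2}\Delta_{5}$, $\varepsilon^{2}\Delta_{6}$, where $\Delta_{\alpha}=K_{\alpha}(0,\varepsilon)-{\mathcal L}_{\alpha}(0,\varepsilon)$. Writing $z_{0}={\rm col}(z_{0,1},z_{0,2},z_{0,3})$ and expanding the quadratic form gives
$$z_{0,1}^{T}\Delta_{1}z_{0,1}+2\varepsilon z_{0,1}^{T}\Delta_{2}z_{0,2}+2\varepsilon z_{0,1}^{T}\Delta_{3}z_{0,3}+\varepsilon z_{0,2}^{T}\Delta_{4}z_{0,2}+2\varepsilon^{2}z_{0,2}^{T}\Delta_{5}z_{0,3}+\varepsilon^{2}z_{0,3}^{T}\Delta_{6}z_{0,3}.$$

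Then I would bound each term by Cauchy–Schwarz, using $\|\Delta_{\alpha}\|\le c\varepsilon^{2}$ from Lemma \ref{estim-K-L} at $t=0$. Collecting the terms by powers of $\varepsilon$ produces exactly $c\varepsilon^{2}\psi(z_{0},\varepsilon)$, with the same constant $c$ as in the lemma:
\begin{itemize}
\item the $\varepsilon^{0}$ term contributes $\|z_{0,1}\|^{2}$;
\item the $\varepsilon^{1}$ terms contribute $2\|z_{0,1}\|\|z_{0,2}\|+\|z_{0,2}\|^{2}+2\|z_{0,1}\|\|z_{0,3}\|$;
\item the $\varepsilon^{2}$ terms contribute $2\|z_{0,2}\|\|z_{0,3}\|+\|z_{0,3}\|^{2}$.
\end{itemize}

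The argument is therefore a direct bookkeeping consequence of Lemma \ref{estim-K-L}. The only delicate point is confirming that ${\mathcal L}(t,\varepsilon)$ really admits the block scaling (\ref{block-form-mathcal-L}) with bounded ${\mathcal L}_{\alpha}$, or at least that the $\Delta_{\alpha}$ so defined satisfy the lemma's bound. That is exactly the content of Lemma \ref{estim-K-L} together with Lemma \ref{justification-asymptotics-K_1-6}, so no obstacle remains beyond invoking them.
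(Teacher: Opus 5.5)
Your proposal is correct and is essentially the paper's own argument: the paper likewise writes $J_{\varepsilon}^{*}-J_{\varepsilon 0}=z_{0}^{T}\big[K(0,\varepsilon)-{\mathcal L}(0,\varepsilon)\big]z_{0}$ via Proposition \ref{solvability-K} and (\ref{output-new-form}), then combines the common block scalings (\ref{block-form}), (\ref{block-form-mathcal-L}) with Lemma \ref{estim-K-L}; your term-by-term Cauchy--Schwarz bookkeeping reproduces $\psi(z_{0},\varepsilon)$ exactly. The ``delicate point'' you flag is not actually an issue: for $\varepsilon>0$ the block form (\ref{block-form-mathcal-L}) simply defines the ${\mathcal L}_{\alpha}$ (using the symmetry of ${\mathcal L}$), and only the lemma's bound on $K_{\alpha}-{\mathcal L}_{\alpha}$ at $t=0$ is needed.
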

\begin{pf} The statement of the theorem directly follows from Proposition \ref{solvability-K}, the equations (\ref{block-form}),(\ref{output-new-form}),(\ref{block-form-mathcal-L}) and Lemma \ref{estim-K-L}.
\end{pf}
Now, let us study asymptotic behaviour of $J_{u}\big(u_{\varepsilon 0}(z,t);z_{0}\big)$ and $J_{v}\big(v_{\varepsilon 0}(z,t);z_{0}\big)$. Using Definitions \ref{guaran-res-u}, \ref{guaran-res-v}, the equation (\ref{u_0-v_0}) and the results of \cite{Glizer-Turetsky-Optim}, we obtain
\begin{equation}\label{guar-res-expresssions}\begin{array}{l}J_{u,\varepsilon0} \stackrel{\triangle}{=}J_{u}\big(u_{\varepsilon 0}(z,t);z_{0}\big) = z_{0}^{T}{\mathcal M}(0,\varepsilon)z_{0},\ \ \varepsilon \in (0,\varepsilon_{0}],\\
J_{v,\varepsilon0} \stackrel{\triangle}{=}J_{v}\big(v_{\varepsilon 0}(z,t);z_{0}\big) = z_{0}^{T}{\mathcal N}(0,\varepsilon)z_{0},\ \  \varepsilon \in (0,\varepsilon_{0}],\end{array}
\end{equation}
where  the matrix-valued functions ${\mathcal M}(t,\varepsilon)$, ${\mathcal N}(t,\varepsilon)$ satisfy the terminal-value problems in the interval $[0,t_{f}]$
\begin{equation}\label{eq-mathcal-M}\begin{array}{l}d{\mathcal M}(t,\varepsilon)/dt = - {\mathcal M}(t,\varepsilon){\mathcal A}_{u}(t,\varepsilon) - {\mathcal A}_{u}^{T}(t,\varepsilon){\mathcal M}(t,\varepsilon)\\ - {\mathcal M}(t,\varepsilon)S_{v}(t){\mathcal M}(t,\varepsilon) - {\mathcal D}_{u}(t,\varepsilon),\  \ {\mathcal M}(t_{f},\varepsilon) = F,\end{array}
\end{equation}
\begin{equation}\label{eq-mathcal-N}\begin{array}{l}d{\mathcal N}(t,\varepsilon)/dt = - {\mathcal N}(t,\varepsilon){\mathcal A}_{v}(t,\varepsilon) - {\mathcal A}_{v}^{T}(t,\varepsilon){\mathcal N}(t,\varepsilon)\\ + {\mathcal N}(t,\varepsilon)S_{u}(\varepsilon){\mathcal N}(t,\varepsilon) - {\mathcal D}_{v}(t,\varepsilon),\  \ {\mathcal N}(t_{f},\varepsilon) = F.\end{array}
\end{equation}
In (\ref{eq-mathcal-M})-(\ref{eq-mathcal-N}): ${\mathcal A}_{u}(t,\varepsilon) \stackrel{\triangle}{=} A(t) - S_{u}(\varepsilon)K_{0}(t,\varepsilon)$,\\ ${\mathcal D}_{u}(t,\varepsilon) \stackrel{\triangle}{=} D(t) + K_{0}(t,\varepsilon)S_{u}(\varepsilon)K_{0}(t,\varepsilon)$, ${\mathcal A}_{v}(t,\varepsilon) \stackrel{\triangle}{=} A(t)\\ + S_{v}(t)K_{0}(t,\varepsilon)$, ${\mathcal D}_{v}(t,\varepsilon) \stackrel{\triangle}{=} D(t) - K_{0}(t,\varepsilon)S_{v}(t)K_{0}(t,\varepsilon)$.\\
Based on (\ref{guar-res-expresssions}),(\ref{eq-mathcal-M}),(\ref{eq-mathcal-N}), we obtain (similarly to Lemma \ref{estim-K-L} and Theorem \ref{J_eps*-J_eps0}) the following assertion:
\begin{theorem}\label{J_eps*-J_u-J_v}Let the assumptions A2-A4 be fulfilled. Then, there exists a positive number $\varepsilon_{2} \le \varepsilon_{0}$ such that, for all $\varepsilon \in (0,\varepsilon_{2}]$, the following inequalities are valid: $\big|J_{\varepsilon}^{*} - J_{u,\varepsilon 0}\big| \le c_{u}\varepsilon^{2}\psi_{u}(z_{0},\varepsilon)$ and $\big|J_{\varepsilon}^{*} - J_{v,\varepsilon 0}\big| \le c_{v}\varepsilon^{2}\psi_{v}(z_{0},\varepsilon)$,
where $c_{u} > 0$ and $c_{v} > 0$ are some constants independent of $\varepsilon$; $\psi_{u}(z_{0},\varepsilon) \stackrel{\triangle}{=} \|z_{0,1}\|^{2} + \varepsilon(2\|z_{0,1}\|\|z_{0,2}\| + \|z_{0,2}\|^{2} + 2\|z_{0,1}\|\|z_{0,3}\|) + \varepsilon^{2}(2\|z_{0,2}\|\|z_{0,3}\| + \|z_{0,3}\|^{2})$; $\psi_{v}(z_{0},\varepsilon) \stackrel{\triangle}{=} \|z_{0,1}\|^{2} + \varepsilon(2\|z_{0,1}\|\|z_{0,2}\| + 2\|z_{0,1}\|\|z_{0,3}\|) + \varepsilon^{2}(\|z_{0,2}\| + \|z_{0,3}\|)^{2}$.
\end{theorem}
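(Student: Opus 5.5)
We follow the scheme of Lemma \ref{estim-K-L} and Theorem \ref{J_eps*-J_eps0}, treating the minimizer's and the maximizer's guaranteed results separately. By Proposition \ref{solvability-K} and (\ref{guar-res-expresssions}), $J_{\varepsilon}^{*} - J_{u,\varepsilon 0} = z_{0}^{T}\big[K(0,\varepsilon) - {\mathcal M}(0,\varepsilon)\big]z_{0}$ and $J_{\varepsilon}^{*} - J_{v,\varepsilon 0} = z_{0}^{T}\big[K(0,\varepsilon) - {\mathcal N}(0,\varepsilon)\big]z_{0}$. So it suffices to show that ${\mathcal M}$ and ${\mathcal N}$ exist on $[0,t_{f}]$ for small $\varepsilon$, admit the block form (\ref{block-form}), and have blocks $O(\varepsilon^{2})$-close to the blocks $K_{\alpha}$. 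Expanding the quadratic form blockwise with weights $1,\varepsilon,\varepsilon,\varepsilon,\varepsilon^{2},\varepsilon^{2}$ then produces a function of the type $\psi$. The difference between $\psi_{u}$ and $\psi_{v}$ only records which blocks of the differences actually get the $\varepsilon^{2}$ improvement: for ${\mathcal N}$ the $(2,2)$ block $\varepsilon{\mathcal N}_{4}$ gives an extra power of $\varepsilon$.

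\textbf{Step 1: the minimizer's guaranteed result.} Set $\Delta_{u} = K - {\mathcal M}$ and subtract (\ref{eq-mathcal-M}) from (\ref{eq-K}). Writing $K = K_{0} + \Delta K_{0}$, a direct computation gives
$$\frac{d\Delta_{u}}{dt} = -\Delta_{u}\bar{\mathcal A}_{u} - \bar{\mathcal A}_{u}^{T}\Delta_{u} + \Delta_{u}S_{v}\Delta_{u} + \Delta K_{0}S_{u}(\varepsilon)\Delta K_{0},\qquad \Delta_{u}(t_{f}) = 0 ,$$
where $\bar{\mathcal A}_{u} = A - S_{u}(\varepsilon)K_{0} + S_{v}K$. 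The matrix $\bar{\mathcal A}_{u}$ is the closed-loop matrix ${\mathcal A}$ of (\ref{init-value-pr}) perturbed by $S_{v}\Delta K_{0}$. Because $S_{v}$ is bounded, that perturbation is $O(\varepsilon)$ by Lemma \ref{justification-asymptotics-K_1-6}.

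By a Gronwall-type argument, the fundamental matrix of $\bar{\mathcal A}_{u}$ then obeys the same block estimates (\ref{estimates-Phi-blocks}) as $\Phi$. The boundary-layer exponential decay in the $\Phi_{5}$ block comes from the $-\varepsilon^{-1}K_{4,0}$ term, which is unaffected by the perturbation.

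Next, $\Delta K_{0}S_{u}(\varepsilon)\Delta K_{0}$ has the same structure as in Lemma \ref{estim-K-L}. Here $\varepsilon^{-2}$ multiplies only the fast columns of $\Delta K_{0}$, which carry explicit factors $\varepsilon$ or $\varepsilon^{2}$, and those factors are combined with the $O(\varepsilon)$ bound of Lemma \ref{justification-asymptotics-K_1-6}. This gives a forcing term with block pattern (\ref{block-form}) and blocks $O(\varepsilon^{2})$.

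We then treat the quadratic term $\Delta_{u}S_{v}\Delta_{u}$ by a fixed-point (successive approximations) argument in the weighted block norm. Since $S_{v}$ is bounded and the solution is small, this term is of higher order, and the method also proves existence of ${\mathcal M}$ on $[0,t_{f}]$ for $\varepsilon\le\varepsilon_{2}$. Representing the solution by the variation-of-constants formula (\ref{solution-Delta-K_L}) and integrating the layer terms, where $\int\exp(\bar\beta(t-\sigma)/\varepsilon)d\sigma = O(\varepsilon)$, gives the blockwise bound for $\Delta_{u}$.

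\textbf{Step 2: the maximizer's guaranteed result.} Set $\Delta_{v} = K - {\mathcal N}$. Subtracting (\ref{eq-mathcal-N}) from (\ref{eq-K}) gives
$$\frac{d\Delta_{v}}{dt} = -\Delta_{v}\bar{\mathcal A}_{v} - \bar{\mathcal A}_{v}^{T}\Delta_{v} - \Delta_{v}S_{u}(\varepsilon)\Delta_{v} - \Delta K_{0}S_{v}\Delta K_{0},\qquad \Delta_{v}(t_{f}) = 0 ,$$
where $\bar{\mathcal A}_{v} = A - S_{u}(\varepsilon)K + S_{v}K_{0}$. Now the closed-loop matrix uses the exact $K$, so the fundamental-matrix estimates follow from those for $\Phi$ as in Step 1. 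The forcing term $\Delta K_{0}S_{v}\Delta K_{0}$ has no $\varepsilon^{-2}$ factor.

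The delicate term is the quadratic one with the singular coefficient $S_{u}(\varepsilon) = \varepsilon^{-2}BB^{T}$. Using the block form of $\Delta_{v}$, this coefficient is compensated by the $\varepsilon$, $\varepsilon^{2}$ factors in the fast columns. It is also nonnegative-signed, so it can be handled by comparison or by a fixed-point argument as before. This again yields existence and the blockwise estimate; the weights relevant to the $(2,2)$ block then produce $\psi_{v}$.

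\textbf{Expected main obstacle.} The hardest step is the uniform-in-$\varepsilon$ control of the perturbed fundamental matrices together with the singular quadratic terms. One has to check that every $\varepsilon^{-1}$ or $\varepsilon^{-2}$ is absorbed either by explicit block factors or by integrating the boundary-layer exponential. I would do this bookkeeping blockwise, mirroring the proof of (\ref{estimates-Phi-blocks}). The final step is to insert the blockwise bounds into the quadratic form in $z_{0}$.
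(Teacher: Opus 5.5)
Your scheme is the one the paper intends. Its proof consists only of ``similarly to Lemma \ref{estim-K-L} and Theorem \ref{J_eps*-J_eps0}''. Your difference equations are also correct, with $\bar{\mathcal A}_{u}=A-S_{u}(\varepsilon)K_{0}+S_{v}K$ and $\bar{\mathcal A}_{v}=A-S_{u}(\varepsilon)K+S_{v}K_{0}$.

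\textbf{The gap is the fundamental-matrix estimate in Step 2.} In Step 1, Gronwall works because $\bar{\mathcal A}_{u}-{\mathcal A}=S_{v}\Delta K_{0}=O(\varepsilon)$. In Step 2, however, $\bar{\mathcal A}_{v}-{\mathcal A}=-S_{u}(\varepsilon)\Delta K_{0}$, and the factor $\varepsilon^{-2}$ is only partly absorbed. Write $\delta_{\alpha}=K_{\alpha}-K_{\alpha,0}=O(\varepsilon)$. Then the blocks of this perturbation in positions $(2,1)$, $(2,2)$, $(3,1)$ are $-\varepsilon^{-1}\delta_{2}^{T}$, $-\varepsilon^{-1}\delta_{4}$, $-\varepsilon^{-1}\delta_{3}^{T}$, which are $O(1)$.

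A Gronwall comparison with $\Phi$ therefore bounds the difference of the fundamental matrices only by $O(1)$. That loses the bounds $\|\Phi_{\kappa_{2}}\|\le c_{\Phi}\varepsilon$ of (\ref{estimates-Phi-blocks}), and you cannot do without them. For example, the $(2,2)$ block of $K-{\mathcal N}$ contains $\int_{t}^{t_{f}}\Phi_{2}^{T}\big(\Delta K_{0}S_{v}\Delta K_{0}\big)_{11}\Phi_{2}\,d\sigma$. This is $O(\varepsilon^{4})$, which is the extra power behind $\psi_{v}$, only because $\Phi_{2}=O(\varepsilon)$. With $\Phi_{2}=O(1)$ it would be $O(\varepsilon^{2})$, not even enough for $\|K_{4}-{\mathcal N}_{4}\|\le c\varepsilon^{2}$.

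Saying that the closed loop ``uses the exact $K$'' does not help. Block estimates for the fundamental matrix of $A-[S_{u}(\varepsilon)-S_{v}]K$ are not available in the paper. That matrix also differs from ${\mathcal A}$ by an $O(1)$ perturbation, not an $O(\varepsilon)$ one.

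What you need is a direct proof of the analogue of (\ref{estimates-Phi-blocks}) for $\bar{\mathcal A}_{v}$. One way is a block-structured Gronwall argument:
\begin{itemize}
\item The $O(1)$ terms in the second block row of the perturbation act only through the second block column $(\Phi_{2},\Phi_{5},\Phi_{8})$ of $\Phi$. They therefore contribute $O(\varepsilon)$ after integration, because $\int_{t}^{\sigma}\exp\big(\bar{\beta}(s-\sigma)/\varepsilon\big)ds=O(\varepsilon)$.
\item The $(3,1)$ term is a bounded perturbation of a slow block, exactly like $\bar{A}_{7}$.
\end{itemize}
Equivalently, redo the singular-perturbation analysis using that $K_{4}=K_{4,0}+O(\varepsilon)$ preserves the $\tanh$-type decay of the fast block.

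\textbf{A smaller bookkeeping slip in Step 1.} Let $Q=\Delta K_{0}S_{u}(\varepsilon)\Delta K_{0}$. It does not have all blocks $O(\varepsilon^{2})$ in the pattern (\ref{block-form}). Its $(1,2)$, $(2,2)$, $(2,3)$ entries are $\delta_{2}\delta_{4}+\varepsilon\delta_{3}\delta_{5}^{T}$, $\delta_{4}^{2}+\varepsilon^{2}\delta_{5}\delta_{5}^{T}$ and $\varepsilon\delta_{4}\delta_{5}+\varepsilon^{2}\delta_{5}\delta_{6}$. After removing the weights $\varepsilon,\varepsilon,\varepsilon^{2}$, these blocks are only $O(\varepsilon)$.

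The missing power comes solely from $\int_{t}^{t_{f}}\|\Phi_{5}(\sigma,t,\varepsilon)\|d\sigma=O(\varepsilon)$. An example is the term $\int_{t}^{t_{f}}\Phi_{4}^{T}Q_{22}\Phi_{5}\,d\sigma$ in the $(1,2)$ block of $K-{\mathcal M}$, where $\Phi_{4}=O(1)$. So the layer integration you mention is essential, not a refinement.

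The same computation explains why $\psi_{u}$ has no improvement in the $(2,2)$ slot while $\psi_{v}$ does. Here $Q_{22}=O(\varepsilon^{2})$, whereas $\big(\Delta K_{0}S_{v}\Delta K_{0}\big)_{22}=O(\varepsilon^{4})$.

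Once the Step 2 estimate is supplied, the rest of your plan goes through. That includes the fixed-point treatment of $\Delta_{u}S_{v}\Delta_{u}$ and $\Delta_{v}S_{u}(\varepsilon)\Delta_{v}$ in the weighted norm.
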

As a direct consequence of Theorems \ref{J_eps*-J_eps0} and \ref{J_eps*-J_u-J_v}, we have:
\begin{corollary}\label{approx-ineq}Let the assumptions A2-A4 be fulfilled. Then, for any $u(z,t) \in {\mathcal E}_{u}\big(v_{\varepsilon 0}(z,t))$, any $v(z,t) \in {\mathcal E}_{v}\big(u_{\varepsilon 0}(z,t))$ and all $\varepsilon \in \big(0,\min\{\varepsilon_{1},\varepsilon_{2}\}]$, the following inequality is valid: $J\big(u_{\varepsilon 0}(z,t),v(z,t)\big) - \varepsilon^{2}\big[c\psi(z_{0},\varepsilon) + c_{u}\psi_{u}(z_{0},\varepsilon)\big] \le J_{\varepsilon 0} \le J\big(u(z,t),v_{\varepsilon 0}(z,t)\big) + \varepsilon^{2}\big[c\psi(z_{0},\varepsilon) + c_{v}\psi_{v}(z_{0},\varepsilon)\big].$
\end{corollary}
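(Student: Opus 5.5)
The corollary is a chain of three inequalities, and I would assemble it from facts already established. The central link is $J_{u,\varepsilon0} \le J_\varepsilon^* + c_u\varepsilon^2\psi_u \le J_{\varepsilon0} + \varepsilon^2[c\psi + c_u\psi_u]$, with the analogous chain on the maximizer's side. The plan is to combine the definitions of the guaranteed results with Theorems \ref{J_eps*-J_eps0} and \ref{J_eps*-J_u-J_v}. Throughout, fix $\varepsilon \in (0,\min\{\varepsilon_1,\varepsilon_2\}]$, so that both theorems apply simultaneously, and fix $z_0$.

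For the left inequality, take any $v(z,t)\in{\mathcal E}_{v}(u_{\varepsilon 0}(z,t))$. By Definition \ref{guaran-res-u}, $J(u_{\varepsilon0},v) \le \sup_{\tilde v\in{\mathcal E}_v(u_{\varepsilon0})} J(u_{\varepsilon0},\tilde v) = J_{u,\varepsilon0}$. This uses the fact that $u_{\varepsilon0}\in{\mathcal H}_u$, which holds because $(u_{\varepsilon0},v_{\varepsilon0})\in UV$, so the set is nonempty. Theorem \ref{J_eps*-J_u-J_v} gives $J_{u,\varepsilon0} \le J_\varepsilon^* + c_u\varepsilon^2\psi_u(z_0,\varepsilon)$. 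Theorem \ref{J_eps*-J_eps0} gives $J_\varepsilon^* \le J_{\varepsilon0} + c\varepsilon^2\psi(z_0,\varepsilon)$. Chaining these three inequalities and moving the $\varepsilon^2$ terms to the left side yields the first inequality of the corollary.

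The right inequality is symmetric. For $u(z,t)\in{\mathcal E}_u(v_{\varepsilon0}(z,t))$, Definition \ref{guaran-res-v} gives $J(u,v_{\varepsilon0}) \ge J_{v,\varepsilon0}$. Theorem \ref{J_eps*-J_u-J_v} gives $J_{v,\varepsilon0} \ge J_\varepsilon^* - c_v\varepsilon^2\psi_v$. Theorem \ref{J_eps*-J_eps0} gives $J_\varepsilon^* \ge J_{\varepsilon0} - c\varepsilon^2\psi$. Combining them gives $J_{\varepsilon0} \le J(u,v_{\varepsilon0}) + \varepsilon^2[c\psi + c_v\psi_v]$.

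There is no real obstacle here, since the argument consists only of the chaining above. The only points to check are that $\varepsilon_1,\varepsilon_2\le\varepsilon_0$, so that Corollary \ref{saddle-point-existence} guarantees $J_\varepsilon^*$ exists, and that the representations (\ref{guar-res-expresssions}) are valid on this range of $\varepsilon$.
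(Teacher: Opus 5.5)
Your proposal is correct and follows the paper's approach. The paper states the corollary as a direct consequence of Theorems \ref{J_eps*-J_eps0} and \ref{J_eps*-J_u-J_v}. Your chain, using $J(u_{\varepsilon0},v)\le J_{u,\varepsilon0}$ and $J(u,v_{\varepsilon0})\ge J_{v,\varepsilon0}$ from the definitions of the guaranteed results together with those two theorems on $(0,\min\{\varepsilon_1,\varepsilon_2\}]$, supplies exactly the details the paper omits.
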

\begin{remark}\label{approx-saddle-point}Due to the comparison of Theorem \ref{J_eps*-J_eps0} and Corollary \ref{approx-ineq} with Proposition \ref{solvability-K}, we can call the pair $\big(u_{\varepsilon 0}(z,t),v_{\varepsilon 0}(z,t)\big)$ and the inequality of Corollary \ref{approx-ineq} an approximate-saddle point and an approximate-saddle point inequality, respectively, in the CCDG.
\end{remark}
\section{Example}\label{example}
We consider the pursuit-evasion engagement between two flying vehicles modeled by the following system:
\begin{equation}\label{syst-example}\begin{array}{l}dx(t)/dt = y_{1}(t) + v_{1}(t),\  \  \  \  \  \ \  \  \  \  \  \  \ \ x(0) = 0,\\
dy_{1}(t)/dt = y_{2}(t) + u_{1}(t) + v_{2}(t),\  \ y_{1}(0) = 2,\\
dy_{2}(t)/dt = - y_{2}(t) + u_{2}(t),\  \  \  \  \ \  \  \  \ \ y_{2}(0) = 1,\end{array}
\end{equation}
where $t \in [0,1.5]$

The system (\ref{syst-example}), being a modification of the results of \cite{Shinar-Glizer-Turetsky-Annals}, represents a linearized kinematic model of a planar engagement between two flying vehicles (the players): a pursuer and an evader. The pursuer is controlled by the lateral acceleration command $u_{2}(t)$ and the additional lateral acceleration $u_{1}(t)$, while the evader is controlled by the lateral acceleration $v_{2}(t)$ and the lateral velocity $v_{1}(t)$. The state coordinates $x(t)$ and $y_{1}(t)$ are the lateral separation and the relative lateral velocity of the vehicles, while $y_{2}(t)$ is the main lateral acceleration of the pursuer generated by the control $u_{2}(t)$. The behaviour of the players is evaluated by the cost functional
\begin{equation}\label{functional-example}\begin{array}{l}J(u,v) = 0.5x^{2}(t_{f}) + \int_{0}^{1.5}\big[6.4x^{2}(t) + 10y_{1}^{2}(t)\\
+ \varepsilon^{2}\big(u_{1}^{2}(t) + u_{2}^{2}(t)\big) - 5v_{1}^{2}(t) - 4v_{2}^{2}(t)\big]dt,\end{array}\end{equation}
where $u = {\rm col}(u_{1},u_{2})$, $v = {\rm col}(v_{1},v_{2})$; $\varepsilon > 0$ is a small parameter. The cost functional (\ref{functional-example}) is minimized by the pursuer and maximized by the evader.
Assuming that both players of the game (\ref{syst-example})-(\ref{functional-example}) are aware of the game's data and of the game's current position $\big(x(t),y_{1}(t),y_{2}(t),t\big)$, we obtain that this game is a particular case of the CCDG (\ref{eq-x})-(\ref{perf-ind}) where $n = 1$, $m = 2$, $l = 2$, $m_{1} = 1$, $F_{1} = 0.5$, $D_{1}(t) \equiv 6.4$, $D_{2}(t) \equiv {\rm diag}(10,0)$, $G(t) \equiv {\rm diag}(5 , 4)$, $t_{f} = 1.5$, $A_{1}(t) \equiv 0$, $A_{2}(t) \equiv [1 , 0]$, $A_{3}(t) \equiv {\rm col}(0 , 0)$,
$$\begin{array}{l}
A_{4}(t) \equiv \left[\begin{array}{l}0\  \  \ \ \ \ 1\\0\  \ -1\end{array}\right],\ C_{1}(t) \equiv [1 , 0],\ C_{2}(t) \equiv \left[\begin{array}{l}0\  \ 1\\ 0\  \ 0\end{array}\right].\end{array}$$
Using these data, as well as Remark \ref{D_2-form} and the equations (\ref{new-matrices}),(\ref{S_u-S_v}),(\ref{block-form-A-C}),(\ref{Lambda}), we obtain in this example
\begin{equation}\label{bar-A-S_valpha}\begin{array}{l}\bar{A}_{1}(t) = \bar{A}_{3}(t) = \bar{A}_{4}(t) = \bar{A}_{5}(t) = \bar{A}_{7}(t) = \bar{A}_{8}(t) \equiv 0,\\
\bar{A}_{2}(t) = \bar{A}_{6}(t) \equiv 1,\ \bar{A}_{9}(t) \equiv -1,\ S_{v,1}(t) \equiv 0.2,\\ \Lambda(t) \equiv 10.\end{array}
\end{equation}
Due to (\ref{asymp-K}),(\ref{mathcal-K^b_js}),(\ref{outer-solution}),(\ref{K_30^o})-(\ref{K_20^b=K_40^b-K_50^b}),(\ref{functional-example}),(\ref{bar-A-S_valpha}), we obtain
$$\begin{array}{l}K_{1,0}(t,\varepsilon) = K_{1,0}^{o}(t)
 = 8\tan\big(\arctan(1/16) + 1.2 - 0.8t\big),\\
K_{2,0}(t,\varepsilon) = \big[K_{1,0}^{o}(t) + \exp(2\beta\tau)\big(1 + \exp(2\beta\tau)\big)^{-1}\big]/\beta,\\
 K_{3,0}(t,\varepsilon) \equiv 0,\\
K_{4,0}(t,\varepsilon) = \beta\big[1 - 2\exp(2\beta\tau)\big(1 + \exp(2\beta\tau)\big)^{-1}\big],\\
K_{5,0}(t,\varepsilon) = 1 - 2\exp(\beta\tau)\big(1 + \exp(2\beta\tau)\big)^{-1},\\
K_{6,0}(t,\varepsilon) = K_{6,0}^{o}(t) =\gamma\tanh(\gamma(t-1.5))\times\\
(\gamma\tanh(\gamma(t-1.5))-2)^{-1},
\end{array}$$
where $t \in [0,1.5]$, $\tau = (t - 1.5)/\varepsilon$, $\beta = \sqrt{10}$, $\gamma = \sqrt{2}$.\\
Using $K_{\alpha,0}(t,\varepsilon)$, $(\alpha = 1,...,6)$ and the equations (\ref{block-form-K_0}),(\ref{u_0-v_0}), we derive the components of the approximate-saddle point in the game (\ref{syst-example})-(\ref{functional-example}) as:
$$\begin{array}{l}u_{\varepsilon 0}(z,t) = - \left[\begin{array}{l}\big(K_{2,0}(t,\varepsilon)x +K_{4, 0}(t,\varepsilon)y_{1}\big)/\varepsilon\\ + K_{5,0}(t,\varepsilon)y_{2},\\
K_{5,0}(t,\varepsilon)y_{1} + K_{6,0}^{o}(t)y_{2}\end{array}\right],\\
v_{\varepsilon0}(z,t) = \left[\begin{array}{l}0.2\big(K_{1,0}^{o}(t)x+\varepsilon K_{2,0}(t,\varepsilon)y_{1}\big),\\
0.25\varepsilon\big(K_{2,0}(t,\varepsilon)x+K_{4,0}(t,\varepsilon)y_{1}\\ +
\varepsilon K_{5,0}(t,\varepsilon)y_{2}\big)\end{array}\right],\end{array}
$$
where $z = {\rm col}(x,y_{1},y_{2})$.

\begin{figure}[ht]
\includegraphics[scale=0.135]{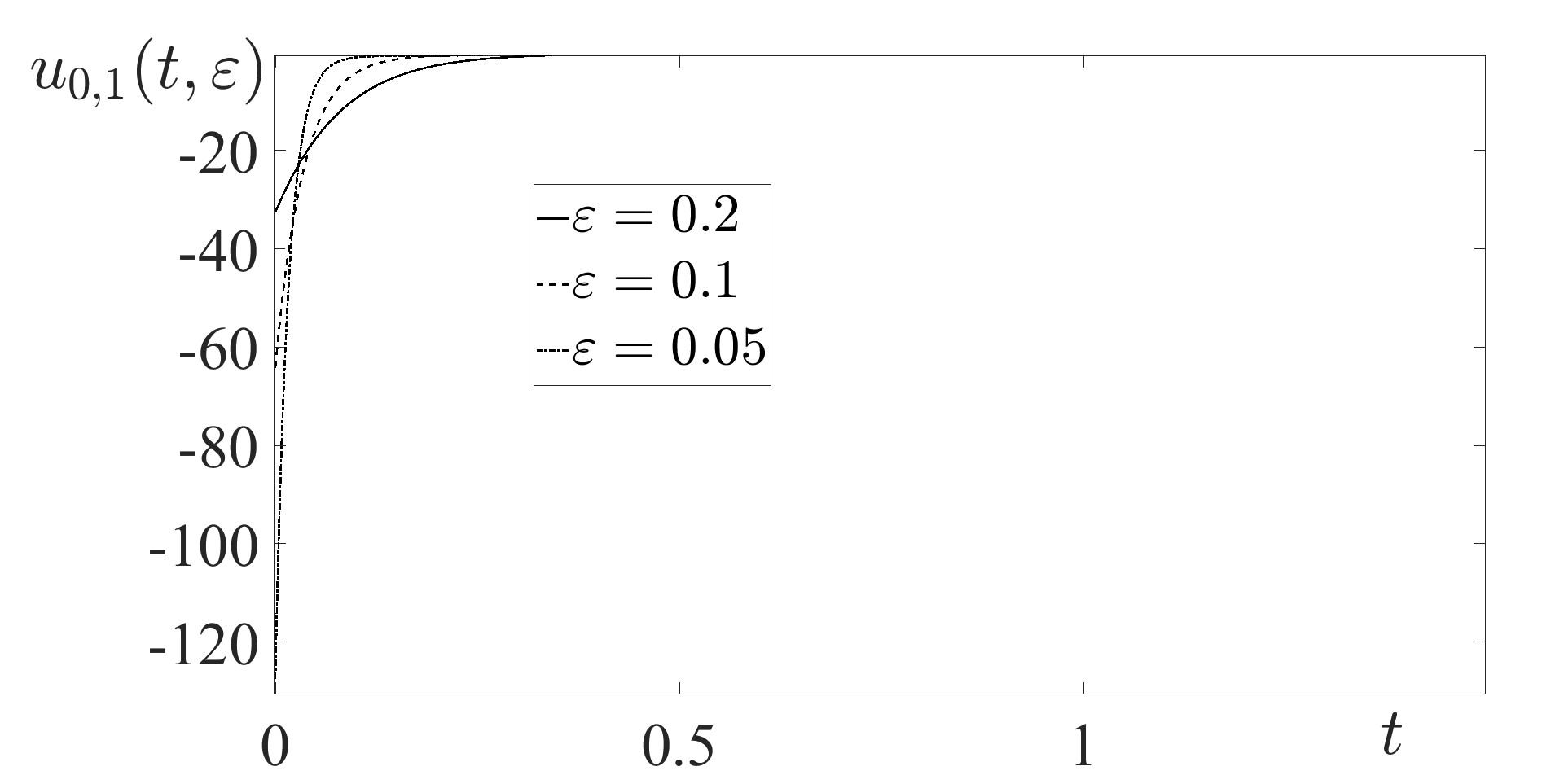}
\caption{Time history of  $u_{0,1}(t,\varepsilon)$}
\label{ueps0_1-fig}
\end{figure}

\begin{figure}[ht]
\includegraphics[scale=0.135]{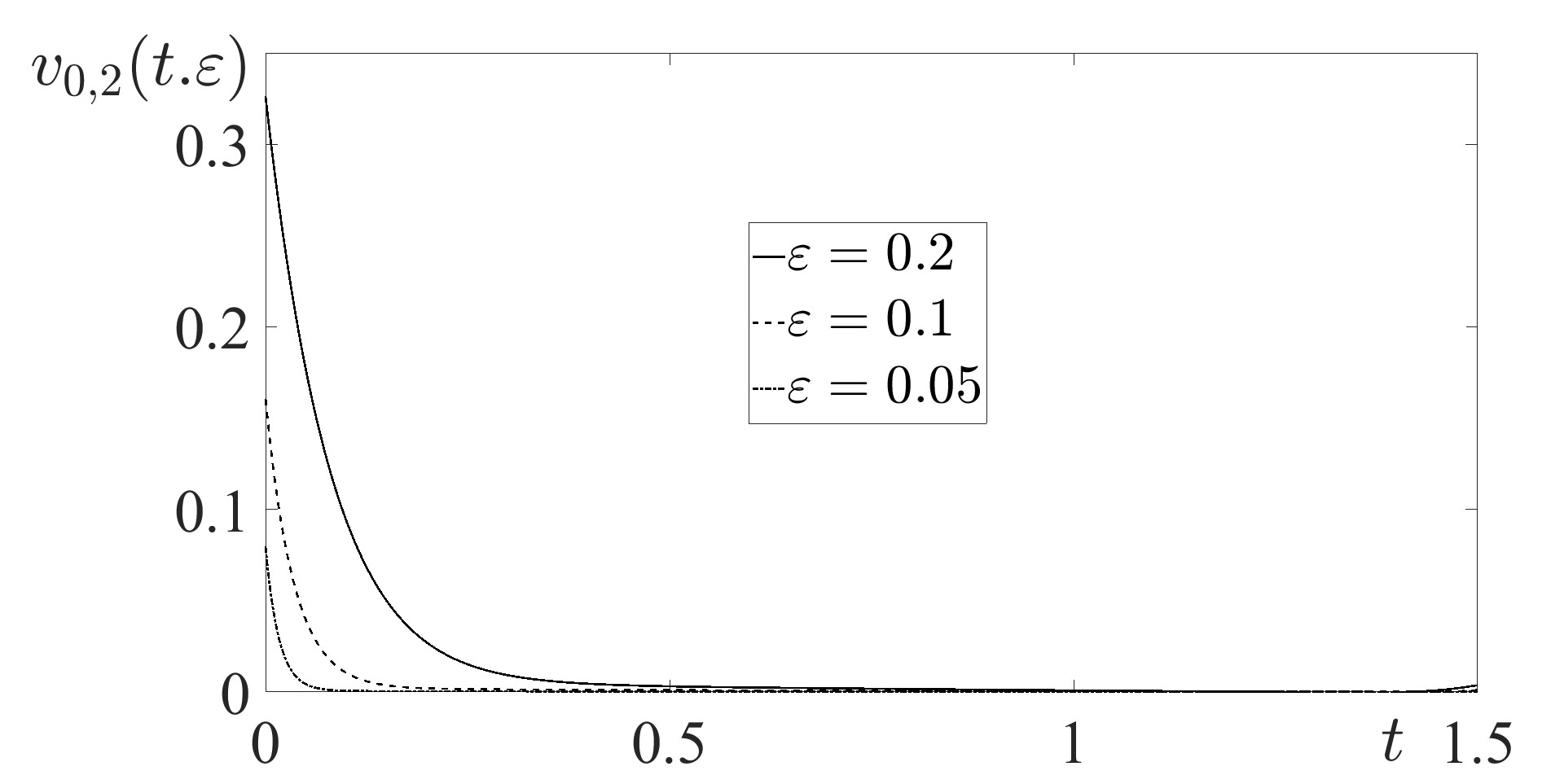}
\caption{Time history of $v_{0,2}(t,\varepsilon)$}
\label{veps0_2-fig}
\end{figure}

Due to the form of the first (upper) entry of $u_{\varepsilon 0}(z,t)$, its time realization $u_{0,1}(t,\varepsilon)$ along the trajectory of (\ref{syst-example}), generated by $\big(u_{\varepsilon 0}(z,t),v_{\varepsilon 0}(z,t)\big)$, is expected to have an impulse-like behaviour for $\varepsilon \rightarrow + 0$. The time realizations of the second entry of $u_{\varepsilon 0}(z,t)$ and of both entries of $v_{\varepsilon 0}(z,t)$ do not have such behaviour. Moreover, the time realization $v_{0,2}(t,\varepsilon)$ of the second entry of $v_{\varepsilon 0}(z,t)$ is expected to tend to zero for $\varepsilon \rightarrow + 0$. These features are illustrated by Figs. \ref{ueps0_1-fig}  and \ref{veps0_2-fig}, respectively, where the time histories of $u_{0,1}(t,\varepsilon)$ and $v_{0,2}(t,\varepsilon)$ are depicted for decreasing values of $\varepsilon > 0$.\\
In Table \ref{appr_SP_errors-table}, the absolute errors $\Delta J_{\varepsilon 0}=|J_\varepsilon^*-J_{\varepsilon 0}|$ and the relative errors $\delta J_{\varepsilon 0} = \left(\Delta J_{\varepsilon 0}/J_\varepsilon^*\right)\cdot 100\%$ are presented for $\varepsilon=0.2, 0.1, 0.05$. The game value $J_\varepsilon^*$ was calculated by the numerical solution of the Riccati equation (\ref{eq-K}). It is seen that the values $J_{\varepsilon 0}$ provide a reliable approximation of the game value: the absolute error decreases with $\varepsilon$, the relative errors are small.

\begin{table}[ht]
\begin{tabular}{|c|c|c|c|c|}
\hline
$\varepsilon$ & $J_\varepsilon^*$ & $J_{\varepsilon 0}$ & $\Delta J_{\varepsilon 0}$ &$\delta J_{\varepsilon 0}$, \%\\
\hline
$0.2$ & $3.1892$ & $3.2247$ & $0.0355$ &  $1.11$\\
\hline
$0.1$ & $1.4184$ & $1.4258$ & $0.0074$ & $0.52$\\
\hline
$0.05$ & $ 0.6696$ & $0.6733$  & $0.0037$  & $0.55$\\
\hline
\end{tabular}

\caption{Errors of $J_{\varepsilon 0}$\  \  \  \  \  \  \ \  \  \  \  \  \  \ \  \  \  \  \  \  \  \  \  \  \ }
\label{appr_SP_errors-table}
\end{table}

In Table \ref{appr_Ju_errors-table}, the absolute errors $\Delta J_{u,\varepsilon 0}=|J_\varepsilon^*-J_{u,\varepsilon0}|$ and the relative errors $\delta J_{u,\varepsilon 0}=\left(\Delta J_{u,\varepsilon 0}/J_\varepsilon^*\right)\cdot 100\%$ are presented for $\varepsilon=0.2, 0.1, 0.05$. The respective absolute errors $\Delta J_{v,\varepsilon 0}=|J_\varepsilon^*-J_{v,\varepsilon 0}|$ and the relative errors $\delta_{v,\varepsilon 0}=\left(\Delta J_{v,\varepsilon 0}/J_\varepsilon^*\right)\cdot 100\%$ are presented in Table \ref{appr_Jv_errors-table}. The values $J_{u,\varepsilon 0}$ and $J_{v,\varepsilon 0}$ were calculated by the numerical solution of the Riccati equations (\ref{eq-mathcal-M}) and (\ref{eq-mathcal-N}), respectively.
It is seen that the values $J_{u,\varepsilon 0}$ and $J_{v,\varepsilon 0}$ provide a reliable approximation of the game value. Moreover, as it can be seen from Tables \ref{appr_SP_errors-table} - \ref{appr_Jv_errors-table}, the approximation of the game value by $J_{u,\varepsilon 0}$ and $J_{v,\varepsilon 0}$ is (as a rule) more accurate than such an approximation by $J_{\varepsilon 0}$. It is reasonable, because in $J_{u,\varepsilon 0}$ and $J_{v,\varepsilon 0}$ only one control's approximation (either $u_{\varepsilon 0}(z,t)$ or $v_{\varepsilon 0}(z,t)$) is used, while in $J_{\varepsilon 0}$ both controls' approximations are used. From the other hand, $J_{\varepsilon 0}$ requires a simpler calculation than $J_{u,\varepsilon 0}$ and $J_{v,\varepsilon 0}$, because the problem (\ref{eq-mathcal-L}) is linear, while the problems (\ref{eq-mathcal-M}) and (\ref{eq-mathcal-N}) are non-linear.

\begin{table}[ht]
\begin{tabular}{|c|c|c|c|c|}
\hline
$\varepsilon$ & $J_\varepsilon^*$ & $J_{u,\varepsilon 0}$ & $\Delta J_{u,\varepsilon 0}$ &$\delta J_{u,\varepsilon 0}$, \%\\
\hline
$0.2$ & $3.1892$ & $3.2401$ & $0.051$ &  $1.6$\\
\hline
$0.1$ & $1.4184$ & $1.4234$ & $0.005$ & $0.35$\\
\hline
$0.05$ & $ 0.6696$ & $0.6702$  & $5.57\cdot 10^{-4}$  & $0.08$\\
\hline
\end{tabular}

\caption{Errors of $J_{u,\varepsilon 0}$\  \  \  \  \  \  \  \  \  \  \  \  \  \  \  \  \  \  }
\label{appr_Ju_errors-table}
\end{table}

\begin{table}[ht]
\begin{tabular}{|c|c|c|c|c|}
\hline
$\varepsilon$ & $J_\varepsilon^*$ & $J_{v,\varepsilon 0}$ & $\Delta J_{v,\varepsilon 0}$ &$\delta J_{v,\varepsilon 0}$, \%\\
\hline
$0.2$ & $3.1892$ & $3.1755$ & $0.0137$ &  $0.43$\\
\hline
$0.1$ & $1.4184$ & $1.4176$ & $7.89\cdot 10^{-4}$ & $0.06$\\
\hline
$0.05$ & $ 0.6696$ & $0.6696$  & $4.73\cdot 10^{-5}$  & $0.007$\\
\hline
\end{tabular}

\caption{Errors of $J_{v,\varepsilon 0}$\  \  \  \  \  \  \ \  \  \  \  \  \  \  \  \  \  \  }
\label{appr_Jv_errors-table}
\end{table}

\section{Conclusions}
In this paper, a two-player finite-horizon zero-sum linear-quadratic differential game was studied in the case where the control cost of the minimizing player (the minimizer) is much smaller than the control cost of its opponent and the state cost. This smallness is due to the presence of the small multiplier $\varepsilon > 0$ in the control cost of the minimizer. Thus, the considered game is a cheap control game. One more feature of the game is that its dynamics has slow and fast state variables, and the cost of the fast state in the integrand of the cost functional is a positive semi-definite (but non-zero) quadratic form. The asymptotic analysis of the game with respect to $\varepsilon$ was carried out. This analysis includes the asymptotic solution of the corresponding singularly perturbed game-theoretic matrix Riccati differential equation, derivation of the approximate-saddle point and obtaining the approximation of the game value. It should be noted that the derivation of the asymptotic solution to the Riccati equation requires the developing the considerably novel method. Based on the theoretical results, the approximate solution of the real-life pursuit-evasion game was derived. Numerical simulation shows that a very accurate approximation of the game's solution is obtained even for not too small values of $\varepsilon > 0$.

\end{document}